\renewcommand{\leq}{\leqslant}
\renewcommand{\geq}{\geqslant}
\newcommand{\ptl}{\partial}
\newcommand{\rr}{{\mathbb{R}}}
\newcommand{\rrn}{\mathbb{R}^{n+1}}
\newcommand{\sph}{\mathbb{S}}
\newcommand{\sub}{\subset}
\newcommand{\subeq}{\subseteq}
\newcommand{\escpr}[1]{\big<#1\big>}
\newcommand{\Sg}{\Sigma} \newcommand{\sg}{\sigma}
\newcommand{\Om}{\Omega}
\newcommand{\eps}{\varepsilon}
\newcommand{\ric}{\text{Ric}}
\newcommand{\ind}{\mathcal{Q}}
\newcommand{\indo}{\mathcal{I}}
\newcommand{\cone}{{\times\!\!\!\!\times}}
\DeclareMathOperator{\divv}{div}
\newtheorem{theorem}{Theorem}[section]
\newtheorem{proposition}[theorem]{Proposition}
\newtheorem{lemma}[theorem]{Lemma}
\newtheorem{corollary}[theorem]{Corollary}
\theoremstyle{definition}
\newtheorem{remark}[theorem]{Remark}
\newtheorem{example}[theorem]{Example}
\theoremstyle{remark}
\numberwithin{equation}{section}
\begin{document}

\bibliographystyle{amsplain}  

\title[Free boundary stability and rigidity in manifolds with density]
{Free boundary stable hypersurfaces in manifolds with density and rigidity results}

\author[K.~Castro]{Katherine Castro}
\address{Departamento de Geometr\'{\i}a y Topolog\'{\i}a \\
Universidad de Granada \\ E--18071 Granada \\ Spain}
\email{ktcastro@ugr.es}

\author[C.~Rosales]{C\'esar Rosales}
\address{Departamento de Geometr\'{\i}a y Topolog\'{\i}a \\
Universidad de Granada \\ E--18071 Granada \\ Spain}
\email{crosales@ugr.es}

\date{\today}

\thanks{The authors have been supported by MICINN-FEDER grant MTM2010-21206-C02-01, and Junta de Andaluc\'ia grants FQM-325 and P09-FQM-5088.} 

\subjclass[2000]{53A10, 53C24} 

\keywords{Manifolds with density, free boundary, mean curvature, stability, rigidity}

\begin{abstract}
Let $M$ be a weighted manifold with boundary $\ptl M$, i.e., a Riemannian manifold where a density function is used to weight the Riemannian Hausdorff measures. In this paper we compute the first and the second variational formulas of the interior weighted area for deformations by hypersurfaces with boundary in $\ptl M$. As a consequence, we obtain variational characterizations of critical points and second order minima of the weighted area with or without a volume constraint. Moreover, in the compact case, we obtain topological estimates and rigidity properties for free boundary stable and area-minimizing hypersurfaces under certain curvature and boundary assumptions on $M$. Our results and proofs extend previous ones for Riemannian manifolds (constant densities) and for hypersurfaces with empty boundary in weighted manifolds.
\end{abstract}

\maketitle

\thispagestyle{empty}

\section{Introduction}
\label{sec:intro}
\setcounter{equation}{0}

Stable hypersurfaces in a Riemannian manifold with boundary are second order minima of the \emph{interior area} for compactly supported deformations preserving the boundary of the manifold and,  possibly, the volume separated by the hypersurface. From the first variation formulas \cite{ros-vergasta} such hypersurfaces have constant mean curvature and free boundary meeting orthogonally the boundary of the manifold. Moreover, the second variation formula \cite{ros-vergasta} implies that the associated index form is nonnegative for functions with compact support (and mean zero if the volume-preserving condition is assumed). The stability property has been extensively discussed and plays a central role in relation to classical minimization problems such as the Plateau problem or the isoperimetric problem.

The study of variational questions associated to the area functional in \emph{manifolds with density}, also called \emph{weighted manifolds} or \emph{smooth mm-spaces}, has been focus of attention in the last years. A \emph{manifold with density} is a connected Riemannian manifold, possibly with boundary, where a smooth positive function is used to weight the Hausdorff measures associated to the Riemannian distance. This kind of structures have been considered by many authors and provide a generalization of Riemannian geometry which is currently of increasing interest. For a nice introduction to weighted manifolds we refer the reader to Chapter 18 of Morgan's book \cite{gmt} and to Chapter 3 of Bayle's thesis \cite{bayle-thesis}. In the present paper we study \emph{free boundary stable hypersurfaces} in manifolds with density, by obtaining variational characterizations, topological and geometrical information, and rigidity results for the ambient manifold. In order to describe our results in more detail we need to introduce some notation and definitions.

Let $M$ be a Riemannian manifold endowed with a density $f=e^\psi$. We consider a smooth oriented hypersurface $\Sg$ immersed in $M$ in such a way that $\text{int}(\Sg)\sub\text{int}(M)$ and $\ptl\Sg\sub\ptl M$ whenever $\ptl\Sg\neq\emptyset$. We say that $\Sg$ is \emph{strongly $f$-stationary} if it is a critical point of the \emph{weighted area functional} under compactly supported deformations preserving the boundary $\ptl M$. Note that the \emph{weighted area} $A_f(\Sg)$ defined in \eqref{eq:volarea} is relative to the interior of $M$, so that $\Sg\cap\ptl M$ does not contribute to $A_f(\Sg)$. If, in addition, the hypersurface $\Sg$ has non-negative second derivative of the weighted area for any variation, then we say that $\Sg$ is \emph{strongly $f$-stable}. In the Riemannian setting (constant density $f=1$), these definitions coincide with the classical notions of free boundary minimal and stable hypersurfaces. Recently, many authors have considered complete strongly $f$-stable hypersurfaces \emph{with empty boundary}, see \cite{fan}, \cite{ho}, \cite{calibrations}, \cite{mejia}, \cite{espinar} and \cite{liu}, among others. However, not much is known about strongly $f$-stable hypersurfaces \emph{with non-empty boundary}, and this has been in fact our main motivation in the present work.

Our first aim in this paper is to provide variational characterizations of strongly $f$-stationary and stable hypersurfaces in the same spirit of the ones given by Ros and Vergasta in \cite{ros-vergasta} for the Riemannian case. This is done in Section~\ref{sec:var}, where we follow the arguments for hypersurfaces with empty boundary in \cite[Ch.~3]{bayle-thesis} and \cite{rcbm}, in order to compute the first and second derivatives of the weighted area. As a consequence, we deduce that a hypersurface $\Sg$ in $M$ is strongly $f$-stationary if and only if it has vanishing $f$-mean curvature and meets $\ptl M$ orthogonally in the points of $\ptl\Sg$, see Corollary~\ref{cor:stationary}. The $f$-\emph{mean curvature} of $\Sg$ is the function $H_f$ in \eqref{eq:fmc} previously introduced by Gromov \cite{gromov-GAFA} in relation to the first derivative of the weighted area, see Lemma~\ref{lem:1st}. We also show that the strong $f$-stability of $\Sg$ is equivalent to that the associated $f$-index form defined in \eqref{eq:indo} is nonnegative for smooth functions with compact support, see Corollary~\ref{cor:stable}. At this point, it is worth mentioning that the techniques employed in this section allow also to characterize critical points and second order minima of the weighted area for deformations \emph{preserving the weighted volume} $V_f$ defined in \eqref{eq:volarea}. This is closely related to the \emph{partitioning problem}, which consists of separating a given weighted volume in $M$ with the least possible interior weighted area. However, besides showing some relevant situations in Examples~\ref{ex:1},~\ref{ex:2} and ~\ref{ex:3}, the partitioning problem and the associated $f$-stable hypersurfaces will not be treated in detail. Some characterization results for compact $f$-stable hypersurfaces with free boundary in a Euclidean solid cone where a homogeneous density is considered can be found in \cite{homostable}. 

In the remainder of the paper we mainly investigate the relationship between the topology of compact strongly $f$-stable hypersurfaces and the geometry of the ambient manifold by means of the second variation formula. As a motivation, note that the $f$-index form in \eqref{eq:indo} of a hypersurface $\Sg$ is a quadratic form which involves the extrinsic geometry of $\Sg$, the second fundamental form $\text{II}$ of $\ptl M$, and the \emph{Bakry-\'Emery-Ricci curvature} $\ric_f$ of $M$ defined in \eqref{eq:fricci}. The $2$-tensor $\ric_f$ was first introduced by Lichnerowicz \cite{lich1}, \cite{lich2}, and later generalized by Bakry and \'Emery \cite{be} in the framework of diffusion generators. In particular, it is easy to observe that the stability inequality becomes more restrictive provided $\text{II}$ and $\ric_f$ are always semidefinite positive. Hence \emph{local convexity} of $\ptl M$ and \emph{nonnegativity of the Bakry-\'Emery-Ricci curvature} become natural hypotheses in order to obtain interesting consequences from the stability condition.

In Section~\ref{subsec:ricci} we establish some results in this direction. In fact, by assuming $\ric_f\geq 0$ and $\text{II}\geq 0$ we deduce in a quite straightforward way that a compact strongly $f$-stable hypersurface must be totally geodesic, see Lemma~\ref{lem:easy}. Moreover, we also have $\ric_f(N,N)=0$ and $\text{II}(N,N)=0$, where $N$ is the unit normal to $\Sg$. In particular, if $\ric_f>0$ or $\text{II}>0$, then there are no compact strongly $f$-stable hypersurfaces in $M$. This property was observed by Simons \cite{simons-james} for the Riemannian case, and later generalized by Fan \cite{fan}, and Cheng, Mejia and Zhou \cite{mejia} for hypersurfaces with empty boundary in manifolds with density. On the other hand, Espinar showed in \cite{espinar} that Lemma~\ref{lem:easy} also holds for complete strongly $f$-stable hypersurfaces of finite type and empty boundary. 

The simplest examples of strongly $f$-stable totally geodesic hypersurfaces satisfying $\ric_f(N,N)=0$ and $\text{II}(N,N)=0$ are the horizontal slices $\{s\}\times\Sg$ in a Riemannian product $\rr\times\Sg$, where $\Sg$ is a compact Riemannian manifold of non-negative Ricci curvature, and the logarithm of the density $f$ is a linear function in $\rr$. These are not the unique examples we may give, i.e., the existence of compact strongly $f$-stable hypersurfaces in the above conditions does not imply that the metric of $M$ splits, even locally, as a product metric, see \cite{micallef}. However, in Theorem~\ref{th:ricci} we prove the following rigidity result: 
\begin{quotation}
\emph{If a weighted manifold $M$ with non-negative Bakry-\'Emery-Ricci curvature and locally convex boundary contains a compact, oriented, embedded, locally weighted area-minimizing hypersurface $\Sg$ with non-empty boundary, then there is a neighborhood of $\Sg$ in $M$ isometric to a Riemannian product $(-\eps_0,\eps_0)\times\Sg$}. 
\end{quotation}
This local result can be globalized by means of a standard continuation argument. As a consequence, if we further assume that $M$ is complete and $\Sg$ \emph{minimizes the weighted area in its isotopy class}, then $M$ is a Riemannian quotient of $\rr\times\Sg$. We must remark that this rigidity result was previously obtained by Liu \cite{liu} for weighted area-minimizing hypersurfaces \emph{with empty boundary}. To prove it, Liu used the second variation formula to analyze the weighted area functional for the deformation by normal geodesics leaving from $\Sg$. In our context, however, a normal geodesic starting from $\ptl\Sg$ is not necessarily confined to stay in $M$, and so this deformation cannot be considered. As we will explain in more detail later, this difficulty is solved by taking another deformation which moves $\ptl\Sg$ along $\ptl M$.

In Section~\ref{subsec:scalar} we provide a topological restriction for strongly $f$-stable surfaces, and a rigidity result for weighted area-minimizing surfaces in a weighted $3$-manifold $M$ of \emph{non-negative Perelman scalar curvature} and \emph{$f$-mean convex boundary}.  On the one hand, the \emph{Perelman scalar curvature} $S_f$ defined in \eqref{eq:fscalar} is the generalization of the Riemannian scalar curvature introduced by Perelman \cite{perelman} when showing that the Ricci flow is a gradient flow. Let us indicate, as a remarkable difference with respect to the Riemannian case, that the Perelman scalar curvature \emph{is not the trace} of the Bakry-\'Emery-Ricci curvature. In fact, we have the Bianchi identity $S_f=2\,\nabla^*\ric_f$, where $\nabla^*$ is the adjoint operator of $\nabla$ with respect to the $L^2$-norm for the weighted volume measure $dv_f:=f\,dv$. We also remark that $S_f$ is the limit as $n$ tends to infinity of the conformally invariant scalar curvature $S_f^n$ introduced by Chang, Gursky and Yang, see \cite{chang} for a rigorous statement. On the other hand, the \emph{$f$-mean convexity} of $\ptl M$ means that the $f$-mean curvature of $\ptl M$ is nonnegative when computed with respect to the inner unit normal. 

There are several works on the topology of compact stable minimal surfaces in $3$-manifolds of non-negative scalar curvature and mean convex boundary. Schoen and Yau proved in \cite{schoen-yau} that, if such a surface $\Sg$ is immersed in a Riemannian $3$-manifold $M$ of positive scalar curvature, then it must be topologically a sphere. This was later generalized by Fischer-Colbrie and Schoen \cite{fcs}, who showed that, if $M$ has non-negative scalar curvature, then $\Sg$ is a sphere or a totally geodesic flat torus. These results have been extended for surfaces with empty boundary in manifolds with density by Fan~\cite{fan} and Espinar~\cite{espinar}, respectively. For the case of non-empty boundary, Chen, Fraser and Pang~\cite{fbr}, and Ambrozio~\cite{ambrozio}, have recently proved that a compact free boundary stable minimal surface inside a $3$-manifold of non-negative scalar curvature and mean convex boundary is either a disk or a totally geodesic flat cylinder. This was previously established in $\rr^3$ by Ros \cite{ros-free}, who also showed that stable cylinders cannot appear. In Theorem~\ref{th:main} we obtain the following:
\begin{quotation}
\emph{A smooth, compact, oriented, strongly $f$-stable surface $\Sg$ with non-empty boundary inside a $3$-manifold $M$ of non-negative Perelman scalar curvature and $f$-mean convex boundary is either a disk or a totally geodesic flat cylinder bounded by geodesics in $M$.}
\end{quotation} 
As in the previous results, our proof is based on the second variation for the area, the Gauss formula and the Gauss-Bonnet theorem. Moreover, after a careful analysis we deduce that along a strongly $f$-stable cylinder we have $\ric_f(N,N)=0$, $\text{II}(N,N)=0$ and the density $f$ must be constant.

The existence of strongly $f$-stable cylinders in the previous conditions cannot be discarded. In fact, in the Riemannian product $\rr\times\Sg$ with constant density, where $\Sg$ is the compact cylinder $\sph^1\times [a,b]$ endowed with the Euclidean metric of $\rr^3$, any horizontal slice $\{s\}\times\Sg$ provides a strongly $f$-stable cylinder. As in the case $\ric_f\geq 0$ and $\ptl M$ locally convex, other examples may be given where the Riemannian metric of $M$ does not split as a product metric. However, in Theorem~\ref{th:main2} we prove the following rigidity result: 
\begin{quotation}
\emph{If a weighted $3$-manifold $M$ of non-negative Perelman scalar curvature and $f$-mean convex boundary contains an oriented, embedded, locally weighted area-minimizing cylinder $\Sg$, then there is a neighborhood of $\Sg$ in $M$ which is isometric to $(-\eps_0,\eps_0)\times\Sg$, and the density $f$ is constant in such neighborhood. Moreover, if $M$ is complete and $\Sg$ minimizes the weighted area in its isotopy class, then $\rr\times\Sg$ is an isometric covering of $M$, and the density $f$ is constant on $M$.} 
\end{quotation}
We remark that rigidity results for area-minimizing tori and cylinders in Riemannian $3$-manifolds of non-negative scalar curvature and mean convex boundary were previously established by Cai and Galloway~\cite{cai-galloway}, and by Ambrozio~\cite{ambrozio}. An extension of Cai and Galloway's result for surfaces with empty boundary in manifolds with density has been obtained by Espinar \cite{espinar}. At this point, it is worth mentioning that our rigidity results in Theorems~\ref{th:ricci} and \ref{th:main2} are independent since the hypotheses $\ric_f\geq 0$ and $\ptl M$ locally convex do not necessarily imply $S_f\geq 0$ and $f$-mean convexity of $\ptl M$.

Finally, in Section~\ref{subsec:area} we show how the proofs of Theorems~\ref{th:main} and \ref{th:main2} can be adapted to provide optimal upper and lower bounds for the weighted area of a compact strongly $f$-stable surface in terms of a lower non-vanishing bound for the Perelman scalar curvature of a weighted $3$-manifold with $f$-mean convex boundary. The sharpness of these area estimates comes from the fact that, in case of equality for a locally weighted area-minimizing surface, we get the corresponding rigidity results, see Theorems~\ref{th:main3} and \ref{th:main4} for detailed statements. Previous area estimates in Riemannian $3$-manifolds were given by Shen and Zhu~\cite{shen-zhu} when $\ptl\Sg=\emptyset$, and by Chen, Fraser and Pang~\cite{fbr} when $\ptl\Sg\neq\emptyset$. The associated rigidity results were obtained by Bray, Brendle and Neves \cite{bbn} under a positive lower bound on the scalar curvature, and by Nunes~\cite{nunes} under a negative one. Extensions of these results for surfaces with empty boundary in manifolds with density were found by Espinar~\cite{espinar}.
A unified approach for surfaces with non-empty boundary in Riemannian $3$-manifolds has been given by Ambrozio~\cite{ambrozio}. 

We finish this introduction by explaining the geometric approach employed to prove our local rigidity results. Starting from a strongly $f$-stationary, totally geodesic hypersurface $\Sg$, with $\ric_f(N,N)=0$ and $\text{II}(N,N)=0$, we use the implicit function theorem to find an open neighborhood $\Om$ of $\Sg$ in $M$ which is foliated by a one-parameter family of strongly $f$-stationary hypersurfaces $\Sg_s$ with $\Sg_0=\Sg$, $\text{int}(\Sg_s)\sub\text{int}(M)$ and $\ptl\Sg_s\sub\ptl M$, see Proposition~\ref{prop:deformation}. Then, our curvature and convexity assumptions imply that, for such a family, the derivative of the $f$-mean curvature function is nonnegative. As a consequence, the associated weighted area is strictly decreasing unless any hypersurface $\Sg_s$ is totally geodesic and the normal component of the velocity vector along $\Sg_s$ is constant. Hence, if we start from a locally weighted area-minimizing hypersurface $\Sg$, then we can deduce that the normal vector field along the hypersurfaces $\Sg_s$ is parallel on the open set $\Om$. From here, it is not difficult to conclude that, for some $\eps_0>0$, the restriction to $(-\eps_0,\eps_0)\times\Sg$ of the normal exponential map associated to $\Sg$ is an isometry.

\section{Preliminaries}
\label{sec:preliminaries}
\setcounter{equation}{0}

Let $M$ be a smooth $(C^\infty)$ connected and oriented $(n+1)$-dimensional manifold with a Riemannian metric $g=\escpr{\cdot\,,\cdot}$. We denote by $\text{int}(M)$ and $\ptl M$ the interior and the boundary of $M$, respectively. By a \emph{density} on $M$ we mean a smooth positive function $f=e^\psi:M\to\rr$ used to weight the Hausdorff measures associated to the Riemannian distance. In particular, the \emph{weighted volume} of a Borel set $\Om\subseteq M$ and the \emph{$($interior$)$ weighted area} of a smooth hypersurface $\Sg$ are defined by
\begin{equation}
\label{eq:volarea}
V_f(\Om):=\int_\Om \,dv_f=\int_\Om f\,dv,\qquad 
A_f(\Sg):=\int_{\Sg\cap\text{int}(M)} da_f=\int_{\Sg\cap\text{int}(M)} f\,da, 
\end{equation}
where $dv$ and $da$ stand for the Riemannian elements of volume and area, respectively. According to the previous definition the set $\Sg\cap\ptl M$ does not contribute to $A_f(\Sg)$. We will also denote $dl_f:=f\,dl$, where $dl$ is the $(n-1)$-dimensional Hausdorff measure in $M$.

For a Riemannian manifold $(M,g)$ with density $f=e^\psi$, the \emph{Bakry-\'Emery-Ricci tensor} is defined by
\begin{equation}
\label{eq:fricci}
\text{Ric}_f:=\ric-\nabla^2\psi,
\end{equation}
where $\text{Ric}$ and $\nabla^2$ are the Ricci tensor and the Hessian operator in $(M,g)$. For us the Ricci tensor is given by $\ric(u,v):=\text{trace}(w\mapsto R(u,w)v)$, where $R$ is the curvature tensor in $(M,g)$. The \emph{Perelman scalar curvature} is the function
\begin{equation}
\label{eq:fscalar}
S_f:=S-2\,\Delta\psi-|\nabla\psi|^2,
\end{equation}
where $\Delta$ and $\nabla$ denote the Laplacian and the gradient in $(M,g)$, and $S$ is the scalar curvature given by $S(p):=\sum_{i=1}^{n+1}\ric_p(e_i,e_i)$, for any orthonormal basis $\{e_i\}$ of $T_pM$. For a constant density $f$, we have $\ric_f=\ric$ and $S_f=S$. Note also that $S_f$ \emph{does not coincide} in general with the trace of $\ric_f$.

Let $\Sg$ be a smooth oriented hypersurface immersed in $M$. For any smooth vector field $X$ along $\Sg$, we define the \emph{f-divergence relative} to $\Sg$ of $X$ by
\[
\divv_{\Sg,f}X:=\divv_\Sg X+\escpr{\nabla\psi,X},
\]
where $\divv_\Sg$ is the divergence relative to $\Sg$ in $(M,g)$. If $N$ is a unit normal vector along $\Sg$, then the \emph{f-mean curvature} of $\Sg$ with respect to $N$ is the function
\begin{equation}
\label{eq:fmc}
H_f:=-\divv_{\Sg,f}N=nH-\escpr{\nabla\psi,N},
\end{equation}
where $H:=(-1/n)\divv_\Sg N$ is the mean curvature of $\Sg$ in $(M,g)$. By using the Riemannian divergence theorem it was proved in \cite[Lem.~2.2]{homostable} that equality
\begin{equation}
\label{eq:divthsup}
\int_\Sg\divv_{\Sg,f}X\,da_f=-\int_\Sg H_f\,\escpr{X,N}\,da_f
-\int_{\ptl\Sg}\escpr{X,\nu}\,dl_f,
\end{equation}
holds for any smooth vector field $X$ with compact support on $\Sg$. Here we denote by $\nu$ the conormal vector, i.e., the inner unit normal to $\ptl\Sg$ in $\Sg$.

Finally, we define the \emph{f-Laplacian relative} to $\Sg$ of a function $u\in C^\infty(\Sg)$ by
\begin{equation}
\label{eq:deltaf}
\Delta_{\Sg,f}u:=\divv_{\Sg,f}(\nabla_\Sg u)=\Delta_\Sg u
+\escpr{\nabla_\Sg\psi,\nabla_\Sg u},
\end{equation} 
where $\nabla_\Sg$ is the gradient relative to $\Sg$. For this operator we have the following integration by parts formula, which is an immediate consequence of  \eqref{eq:divthsup}
\begin{equation}
\label{eq:ibp}
\int_\Sg u_1\,\Delta_{\Sg,f}\,u_2\,da_f=-\int_\Sg\escpr{\nabla_\Sg u_1,\nabla_\Sg u_2}\,da_f-\int_{\ptl\Sg}u_1\,\frac{\ptl u_2}{\ptl\nu}\,dl_f,
\end{equation}
where $u_1,u_2\in C^\infty_0(\Sg)$ and $\ptl u_2/\ptl\nu$ is the directional derivative of $u_2$ with respect to $\nu$.

\section{Stationary and stable free boundary hypersurfaces}
\label{sec:var}
\setcounter{equation}{0}

In this section we compute the first and the second derivative of area and volume for a variation of a hypersurface immersed in a manifold with density, and whose boundary lies in the boundary of the manifold. As a consequence, we characterize stationary points and second order minima of the area with or without a volume constraint, thus extending previous results in \cite{ros-vergasta} for Riemannian manifolds (constant density $f=1$), and in \cite[Ch.~3]{bayle-thesis} for hypersurfaces with empty boundary in weighted manifolds. We will follow closely the arguments in \cite[Sect.~3]{rcbm}, where hypersurfaces embedded in $\rrn$ and variations supported away from the boundary were considered. Note also that the first variational formulas for piecewise regular densities were established in \cite[Prop.~2.11]{cvm}.

Let $M$ be a smooth oriented Riemannian manifold endowed with a density $f=e^\psi$. We consider a smooth oriented hypersurface given by an immersion $\varphi_0:\Sg\to M$ such that $\varphi_0(\text{int}(\Sg))\sub\text{int}(M)$ and $\varphi_0(\ptl\Sg)\sub\ptl M$. If $\ptl\Sg=\emptyset$ then we adopt the convention that all the integrals along $\ptl\Sg$ vanish. We denote by $N$ the unit normal along $\Sg$ which is compatible with the orientations of $\Sg$ and $M$.

By a \emph{variation} of $\Sg$ we mean a smooth map $\varphi:(-\eps,\eps)\times\Sg\to M$ satisfying:
\begin{enumerate}
\item[(i)] for any $s\in (-\eps,\eps)$, the map $\varphi_s:\Sg\to M$ defined by $\varphi_s(p):=\varphi(s,p)$ is an immersion with $\varphi_s(\text{int}(\Sg))\sub\text{int}(M)$ and $\varphi_s(\ptl\Sg)\sub\ptl M$, 
\item[(ii)] $\varphi(0,p)=\varphi_0(p)$, for any $p\in\Sg$,
\item[(iii)] there is a compact set $C\subseteq\Sg$ such that $\varphi_s(p)=\varphi_0(p)$ for any $p\in\Sg-C$.
\end{enumerate}
The \emph{velocity vector} is the vector field $X_p:=(\ptl\varphi/\ptl s)(0,p)$ for any $p\in\Sg$. Note that $X$ has compact support and it is tangent to $\ptl M$ in the points of $\ptl\Sg$ by the condition (i) above. The function $A_f(s)$,  that maps any $s\in (-\eps,\eps)$ to the weighted area of $\Sg_s:=\varphi_s(\Sg)$ defined in \eqref{eq:volarea}, is the \emph{weighted area functional} associated to the variation. More explicitly
\begin{equation}
\label{eq:area}
A_f(s)=\int_\Sg (f\circ\varphi_s)\,|\text{Jac}\,\varphi_s|\,da.
\end{equation}
If $p\in\Sg$ and $\{e_i\}$ is any orthonormal basis in $T_p\Sg$, then $|\text{Jac}\,\varphi_s|(p)$ is the squared root of the determinant of the matrix $a_{ij}$ with $a_{ij}=\escpr{e_i(\varphi_s),e_j(\varphi_s)}$. We define the \emph{volume functional} $V_f(s)$ as in \cite[Sect.~2]{bdce}, i.e., $V_f(s)$ denotes the \emph{signed weighted volume} enclosed between $\Sg$ and $\Sg_s$. More precisely
\begin{equation}
\label{eq:volume}
V_f(s)=\int_{[0,s]\times C}\varphi^*(dv_f)=\int_{[0,s]\times C}(f\circ\varphi)\,\varphi^*(dv),
\end{equation}
where $dv_f=f\,dv$ is the weighted volume element in $M$. We say that the variation is \emph{volume preserving} if $V_f(s)$ is constant for any $s$ small enough. 

\begin{remark}
When $\Sg$ is an embedded hypersurface separating an open set $\Om\sub M$ with $V_f(\Om)<+\infty$, then we can associate to any variation of $\Sg$ a family of open sets $\Om_s\sub M$ such that $\Om_0=\Om$ and $\overline{\ptl\Om_s\cap\text{int}(M)}=\Sg_s$ for any $s\in I$. In this situation it is natural to define the volume functional by $\text{Vol}_f(s):=V_f(\Om_s)$. Observe that this functional does not coincide with the signed volume in \eqref{eq:volume}, which vanishes for $s=0$. However, we have $|V'_f(s)|=|\text{Vol}_f'(s)|$ for any $s\in (-\eps,\eps)$.
\end{remark}

In the next result we provide explicit expressions for the first derivatives of the functionals $A_f(s)$ and $V_f(s)$.

\begin{lemma}[First variation formulas]
\label{lem:1st} 
Let $M$ be a smooth oriented Riemannian manifold endowed with a density $f=e^\psi$. Consider a smooth oriented hypersurface $\Sg$ immersed in $M$ with $\emph{int}(\Sg)\sub\emph{int}(M)$ and $\ptl\Sg\sub\ptl M$. Given a variation $\varphi:(-\eps,\eps)\times\Sg\to M$ of $\Sg$ with velocity vector $X$, we have
\[
A_f'(0)=-\int_\Sg H_f\,u\, da_f-\int_{\ptl\Sg}\escpr{X,\nu}\,dl_f, \qquad V_f'(0)=\int_\Sg\,u\, da_f,
\]
where $H_f$ is the $f$-mean curvature of $\Sg$ defined in \eqref{eq:fmc}, $u$ is the normal component of $X$, and $\nu$ is the inner unit normal to $\ptl\Sg$ in $\Sg$.
\end{lemma}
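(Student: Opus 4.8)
The plan is to differentiate the explicit integral expressions \eqref{eq:area} and \eqref{eq:volume} with respect to $s$ and evaluate at $s=0$, then use the weighted divergence theorem \eqref{eq:divthsup} to reorganize the resulting boundary term. Differentiation under the integral sign is legitimate because the integrands are smooth in $s$ and, by property (iii) of a variation, coincide with their $s=0$ value outside the fixed compact set $C\subseteq\Sg$, so every integral may be regarded as taken over $C$. For the area, fix $p\in\Sg$ and an orthonormal basis $\{e_i\}$ of $T_p\Sg$, so $|\text{Jac}\,\varphi_0|(p)=1$ and the matrix $a_{ij}(s):=\escpr{(\varphi_s)_*e_i,(\varphi_s)_*e_j}$ satisfies $a_{ij}(0)=\delta_{ij}$. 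Since $\nabla f=f\,\nabla\psi$, the chain rule gives $\ptl_s|_{s=0}(f\circ\varphi_s)(p)=\escpr{\nabla f,X_p}=f(p)\,\escpr{\nabla\psi,X_p}$. For the Jacobian factor, $\ptl_s|_{s=0}\sqrt{\det a_{ij}(s)}=\tfrac12\sum_i a_{ii}'(0)$, and since covariant derivatives in the $s$ and $e_i$ directions commute on the velocity field (torsion freeness of the pullback connection) we get $a_{ij}'(0)=\escpr{\nabla_{e_i}X,e_j}+\escpr{e_i,\nabla_{e_j}X}$, hence $\tfrac12\sum_i a_{ii}'(0)=\divv_\Sg X$; this is the classical first variation of the unweighted area and is the one genuinely computational step. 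Combining the two derivatives, $\ptl_s|_{s=0}\big((f\circ\varphi_s)\,|\text{Jac}\,\varphi_s|\big)=f\,(\divv_\Sg X+\escpr{\nabla\psi,X})=f\,\divv_{\Sg,f}X$, so that $A_f'(0)=\int_\Sg\divv_{\Sg,f}X\,da_f$. As $X$ has compact support on $\Sg$, formula \eqref{eq:divthsup} applies verbatim and yields $A_f'(0)=-\int_\Sg H_f\,\escpr{X,N}\,da_f-\int_{\ptl\Sg}\escpr{X,\nu}\,dl_f$; writing $u=\escpr{X,N}$ gives the claimed expression.

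For the volume I would work with the pullback $(n+1)$-form $\varphi^*(dv_f)$ on $(-\eps,\eps)\times\Sg$. Decomposing it into its $ds$-component and the remaining terms and applying Fubini, $V_f(s)=\int_0^s\big(\int_C\beta_t\big)\,dt$, where $\beta_t$ is the restriction to $\{t\}\times\Sg$ of $\iota_{\ptl_s}\varphi^*(dv_f)$; hence $V_f'(0)=\int_C\beta_0$. Evaluating $\beta_0$ on a positively oriented tangent frame $v_1,\dots,v_n$ of $\Sg$ and using $dv_f=f\,dv$ gives $\beta_0(v_1,\dots,v_n)=f\,dv(X,v_1,\dots,v_n)$; the tangential part of $X$ contributes nothing, since $n+1$ vectors lying in $T_p\Sg$ are linearly dependent, while the normal part gives $f\,u\,dv(N,v_1,\dots,v_n)=u\,da_f(v_1,\dots,v_n)$ by the orientation convention relating $dv$, $N$ and $da$. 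Therefore $V_f'(0)=\int_C u\,da_f=\int_\Sg u\,da_f$, the last equality because $u$ vanishes off $C$.

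I do not expect a serious obstacle here: the argument is the weighted analogue of the classical first variation formulas, in the line of \cite[Ch.~3]{bayle-thesis} and \cite[Sect.~3]{rcbm}, and \eqref{eq:divthsup} absorbs all the boundary bookkeeping. The only point demanding genuine care is the commutation of covariant derivatives used to compute $\ptl_s|_{s=0}|\text{Jac}\,\varphi_s|$. It is also worth noting that this lemma does not use that $X$ is tangent to $\ptl M$ along $\ptl\Sg$; that hypothesis will be needed only later, when deducing from these formulas the characterization of strongly $f$-stationary hypersurfaces.
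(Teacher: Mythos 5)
Your proposal is correct and follows essentially the same route as the paper: differentiate under the integral sign, use $(d/ds)|_{s=0}|\text{Jac}\,\varphi_s|=\divv_\Sg X$ to obtain $A_f'(0)=\int_\Sg\divv_{\Sg,f}X\,da_f$, apply \eqref{eq:divthsup}, and for the volume extract the $ds$-component of $\varphi^*(dv_f)$ and apply Fubini. The only difference is that you spell out the two computational facts (the derivative of the Jacobian and the evaluation of $\iota_{\ptl_s}\varphi^*(dv_f)$) that the paper simply cites from the literature.
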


\begin{proof}
By differentiating under the integral sign in \eqref{eq:area}, and taking into account that $(d/ds)|_{s=0}\,|\text{Jac}\,\varphi_s|=\divv_\Sg X$, see \cite[Sect.~9]{simon} and \cite[Lem.~5.4]{rosales-sr}, we get
\begin{align*}
A_f'(0)&=\int_{\Sg}\big(\escpr{\nabla f,X}+f\divv_\Sg X\big)\,da=\int_\Sg\divv_{\Sg,f}X\,da_f
\\
&=-\int_\Sg H_f\,u\, da_f-\int_{\ptl\Sg}\escpr{X,\nu}\,dl_f,
\end{align*}
where in the last equality we have used formula \eqref{eq:divthsup}.

Now we compute $V'_f(0)$. As in the proof of \cite[Lem.~(2.1)]{bdce} it is easy to see that
\[
\varphi^*(dv)(s,p)=\escpr{\frac{\ptl\varphi}{\ptl s}(s,p),N_s(p)}\,ds\wedge da,
\]
where $N_s$ is the unit normal along the immersion $\varphi_s:\Sg\to M$ which is compatible with the orientations of $\Sg$ and $M$. By using the definition of $V_f(s)$ in \eqref{eq:volume} and Fubini's theorem, we obtain
\[
V_f'(0)=\int_\Sg\escpr{X,N}\,(f\circ\varphi)\,da=\int_\Sg u\,da_f,
\]
which finishes the proof.
\end{proof}

We say that the hypersurface $\Sg$ is \emph{strongly $f$-stationary} if $A_f'(0)=0$ for any variation of $\Sg$. If $A_f'(0)=0$ for any volume-preserving variation of $\Sg$ then we will say that $\Sg$ is \emph{$f$-stationary}. From the expressions for $A_f'(0)$ and $V_f'(0)$ in Lemma~\ref{lem:1st} we can deduce the following characterization of stationary hypersurfaces.

\begin{corollary}
\label{cor:stationary}
Let $M$ be a smooth oriented Riemannian manifold endowed with a density $f=e^\psi$. Then, for a smooth oriented hypersurface $\Sg$ immersed in $M$ with $\emph{int}(\Sg)\sub\emph{int}(M)$ and $\ptl\Sg\sub\ptl M$, the following statements are equivalent
\begin{itemize}
\item[(i)] $\Sg$ is $f$-stationary $($resp. strongly $f$-stationary$)$.
\item[(ii)] The $f$-mean curvature of $\Sg$ defined in \eqref{eq:fmc} is a constant $H_0$ $($resp. vanishes$)$ and $\Sg$ meets $\ptl M$ orthogonally in the points of $\ptl\Sg$.
\item[(iii)] There is a constant $H_0$ such that $(A_f+H_0\,V_f)'(0)=0$ for any variation of $\Sg$ $($resp. $A_f'(0)=0$ for any variation of $\Sg$$)$.
\end{itemize} 
\end{corollary}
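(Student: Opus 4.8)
The plan is to prove the three equivalences in a cycle, relying essentially on the first variation formulas from Lemma~\ref{lem:1st} together with two standard facts: that one can realize arbitrary normal data along $\Sg$ by a variation, and that volume-preserving variations are exactly those for which $\int_\Sg u\,da_f=0$ up to a correction.

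\smallskip

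First I would prove (ii)$\Rightarrow$(iii). If $H_f\equiv H_0$ and $\Sg$ meets $\ptl M$ orthogonally along $\ptl\Sg$, then for \emph{any} variation with velocity $X$, writing $u=\escpr{X,N}$, Lemma~\ref{lem:1st} gives
\[
(A_f+H_0\,V_f)'(0)=-\int_\Sg H_f\,u\,da_f-\int_{\ptl\Sg}\escpr{X,\nu}\,dl_f+H_0\int_\Sg u\,da_f=-\int_{\ptl\Sg}\escpr{X,\nu}\,dl_f.
\]
Now along $\ptl\Sg$ the vector $X$ is tangent to $\ptl M$ by condition (i) in the definition of variation; since $\Sg\perp\ptl M$ means $\nu$ is the (inner) unit normal of $\ptl M$ restricted to $\ptl\Sg$, hence $\nu\perp T(\ptl M)$, we get $\escpr{X,\nu}=0$ pointwise on $\ptl\Sg$, so the boundary term vanishes and (iii) holds. (In the "strong" case $H_0=0$ and the same computation gives $A_f'(0)=0$.) Then (iii)$\Rightarrow$(i) is immediate: in the non-strong case $(A_f+H_0 V_f)'(0)=0$ for all variations specializes to $A_f'(0)=0$ for volume-preserving variations since those have $V_f'(0)=0$; in the strong case (iii) literally is (i).

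\smallskip

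The substantive implication is (i)$\Rightarrow$(ii), and this is the step I expect to require the most care. The key tool is that, given any smooth function $u\in C_0^\infty(\Sg)$ and any smooth vector field $Z$ along $\ptl\Sg$ tangent to $\ptl M$ with compact support, there exists a variation $\varphi$ of $\Sg$ whose velocity vector $X$ has normal component $u$ and satisfies $X=Z$ along $\ptl\Sg$ --- one constructs $\varphi_s$ by flowing along a fixed vector field extending $uN$ in the interior and extending the boundary data tangentially to $\ptl M$ near $\ptl\Sg$, using a tubular neighborhood of $\ptl M$ to guarantee $\varphi_s(\ptl\Sg)\sub\ptl M$. Granting this, I first take variations supported in $\text{int}(\Sg)$ (so the $\ptl\Sg$-integral is absent and $X$ is purely normal). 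In the strong case $A_f'(0)=-\int_\Sg H_f u\,da_f=0$ for all $u\in C_0^\infty(\text{int}(\Sg))$ forces $H_f\equiv0$ on $\text{int}(\Sg)$, hence on $\Sg$ by continuity. In the non-strong case one restricts to $u$ with $\int_\Sg u\,da_f=0$ (which by Lemma~\ref{lem:1st} makes the variation volume-preserving after a standard correction argument, e.g. \cite[Lem.~2.1]{bdce}), concludes $\int_\Sg H_f u\,da_f=0$ for all such $u$, and deduces $H_f$ is $L^2_f$-orthogonal to the orthogonal complement of the constants, i.e. $H_f\equiv H_0$ for some constant $H_0$. Once $H_f$ is known to be constant (or zero), I plug a general variation back into Lemma~\ref{lem:1st}: the interior terms now cancel against $H_0 V_f'(0)$ (or vanish), leaving $0=-\int_{\ptl\Sg}\escpr{X,\nu}\,dl_f$ for all admissible boundary data $Z=X|_{\ptl\Sg}$. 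Decomposing $\nu$ along $\ptl\Sg$ into its component tangent to $\ptl M$ and its component normal to $\ptl M$, and choosing $Z$ to be an arbitrary tangent-to-$\ptl M$ field realizing any prescribed tangential profile, forces the tangential-to-$\ptl M$ part of $\nu$ to vanish; equivalently $\nu\perp T(\ptl M)$ along $\ptl\Sg$, which is exactly the statement that $\Sg$ meets $\ptl M$ orthogonally. This closes the cycle (i)$\Rightarrow$(ii)$\Rightarrow$(iii)$\Rightarrow$(i), and the parenthetical "strong" versions follow by the same argument with $H_0=0$ throughout.

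\smallskip

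The main obstacle, as indicated, is the careful justification of the variation-existence lemma with prescribed boundary velocity tangent to $\ptl M$: one must ensure the deformed hypersurfaces genuinely keep $\text{int}(\Sg_s)\sub\text{int}(M)$ and $\ptl\Sg_s\sub\ptl M$, which is where a collar neighborhood of $\ptl M$ and a suitable interpolation between the interior normal field $uN$ and the boundary-tangential field are needed. Everything else is a direct application of Lemma~\ref{lem:1st} together with the fundamental lemma of the calculus of variations (in the weighted $L^2_f$ sense, which changes nothing since $f>0$).
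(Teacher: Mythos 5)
Your proposal is correct and follows essentially the same route as the paper: (ii)$\Rightarrow$(iii)$\Rightarrow$(i) by direct substitution into Lemma~\ref{lem:1st}, and (i)$\Rightarrow$(ii) by first testing with mean-zero normal variations supported in $\text{int}(\Sg)$ to get $H_f$ constant, then testing with volume-preserving variations having prescribed boundary velocity tangent to $\ptl M$ (constructed via \cite[Lem.~(2.2)]{bdce}) to force $\escpr{X,\nu}$ to integrate to zero and hence $\nu\perp T(\ptl M)$. The paper phrases this last step as a contradiction starting from $\escpr{N_p,\xi_p}\neq 0$ at some point, which is equivalent to your decomposition of $\nu$ into its components tangent and normal to $\ptl M$.
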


\begin{proof}
We give a proof when $\Sg$ is $f$-stationary (the case strongly $f$-stationary is easier). From Lemma~\ref{lem:1st} we can check that (ii) implies (iii), and that (iii) implies (i). To see that (i) implies (ii) we take a function $u\in C_0^\infty(\Sg)$ with $\text{supp}(u)\sub\text{int}(\Sg)$ and $\int_\Sg u\,da_f=0$. As in \cite[Lem~(2.2)]{bdce}, we find a volume-preserving variation of $\Sg$ whose velocity vector $X$ satisfies $\escpr{X,N}=u$ on $\Sg$. The fact that $\Sg$ is $f$-stationary yields $\int_\Sg H_fu\,da_f=0$ for any $u$ in the previous conditions, and so $H_f$ is constant along $\Sg$. Finally, suppose $\escpr{N_p,\xi_p}\neq 0$ for some $p\in\ptl\Sg$, where $\xi$ denotes the inner unit normal along $\ptl M$. Then, there exists a smooth vector field $Y$ with compact support on $\Sg$ such that $\int_\Sg\escpr{Y,N}\,da_f=0$ and $\escpr{Y,\nu}$ is a cut-off function along $\ptl\Sg$. By using again \cite[Lem.~(2.2)]{bdce} we could construct a volume-preserving variation of $\Sg$ with velocity vector $X$ satisfying $\escpr{X,N}=\escpr{Y,N}$ and $\escpr{X,\nu}=\escpr{Y,\nu}$. Hence we would get $0=A_f'(0)=-\int_{\ptl\Sg}\escpr{X,\nu}\,dl_f$, a contradiction.
\end{proof}

\begin{example}
\label{ex:1}
Let $M=0\cone\mathcal{D}$ be a cone over a smooth region $\mathcal{D}$ of the unit sphere $\sph^n$ in $\rrn$. If $f=e^\psi$ is a radial density, i.e., $\psi(p)$ only depends on $|p|$, then any sphere centered at the origin and intersected with $M$ is $f$-stationary since it has constant $f$-mean curvature, see \cite[Ex.~3.4]{rcbm}, and meets $\ptl M$ orthogonally. As was shown in \cite[Ex.~4.3]{homostable} this also holds if $f=e^\psi$ is a $k$-homogeneous density, i.e., $f(tp)=t^k\,f(p)$ for any $t>0$ and any $p\in M-\{0\}$. Different examples of $f$-stationary curves in planar sectors with density $f(p)=|p|^k$, $k>0$, were found in \cite{dhht}. On the other hand, if $M$ is a half-space or a slab in $\rrn$ with radial density, then the intersection with $M$ of any hyperplane perpendicular to $\ptl M$ and containing $0$ has vanishing $f$-mean curvature, and so it is strongly $f$-stationary. Moreover, for the Gaussian density $f(p)=e^{-|p|^2}$ any hyperplane perpendicular to $\ptl M$ is an $f$-stationary hypersurface. 
\end{example}

Next, we compute the second derivative of the functional $A_f+H_f\,V_f$ for an $f$-stationary hypersurface of constant $f$-mean curvature $H_f$.

\begin{proposition}[Second variation formula]
\label{prop:2nd}
Let $M$ be a smooth oriented Riemannian manifold endowed with a density $f=e^\psi$. Let $\varphi:(-\eps,\eps)\times\Sg\to M$ be a variation of a smooth oriented hypersurface $\Sg$ immersed in $M$ with $\emph{int}(\Sg)\sub\emph{int}(M)$ and $\ptl\Sg\sub\ptl M$. If $\Sg$ is $f$-stationary with constant $f$-mean curvature $H_f$, then we have
\[
(A_f+H_f\,V_f)''(0)=\indo_f(u,u),
\]
where $u$ is the normal component of the velocity vector and $\indo_f$ is the symmetric bilinear form on $C^\infty_0(\Sg)$ defined by
\begin{align}
\label{eq:indo}
\indo_f(v,w)&:=\int_\Sg\left\{\escpr{\nabla_\Sg v,\nabla_\Sg w}
-\big(\emph{Ric}_f(N,N)+|\sigma|^{2}\big)\,vw\right\}da_{f}
\\
\nonumber
&-\int_{\ptl\Sg}\emph{II}(N,N)\,vw\,dl_f.
\end{align}
In the previous expression $\emph{Ric}_f$ denotes the Bakry-\'Emery-Ricci tensor defined in \eqref{eq:fricci},  $\sg$ is the second fundamental form of $\Sg$ with respect to the unit normal $N$, and $\emph{II}$ is the second fundamental form of $\ptl M$ with respect to the inner unit normal.
\end{proposition}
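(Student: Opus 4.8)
The plan is to compute $(A_f+H_f\,V_f)''(0)$ by differentiating the first variation formula from Lemma~\ref{lem:1st} with respect to $s$ and then evaluating at $s=0$ using the $f$-stationarity hypotheses. Concretely, write the first variation at an arbitrary parameter $s$ in the form
\[
(A_f+H_f\,V_f)'(s)=-\int_\Sg (H_f(s)-H_f)\,u_s\,(f\circ\varphi_s)\,|\mathrm{Jac}\,\varphi_s|\,da-\int_{\ptl\Sg}\escpr{X_s,\nu_s}\,(f\circ\varphi_s)\,|\mathrm{Jac}_{\ptl\Sg}\,\varphi_s|\,dl,
\]
where $H_f(s)$ is the $f$-mean curvature of $\Sg_s$, $X_s=\ptl\varphi/\ptl s$, $u_s=\escpr{X_s,N_s}$, and $\nu_s$ is the inner conormal of $\ptl\Sg_s$ in $\Sg_s$. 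Differentiating and using that the first factor $(H_f(s)-H_f)$ vanishes at $s=0$ in the interior integral (so only its derivative survives there), and that $\escpr{X,\nu}\equiv 0$ along $\ptl\Sg$ at $s=0$ by the orthogonality condition in Corollary~\ref{cor:stationary}, we obtain two contributions: an interior term $-\int_\Sg \big(\tfrac{d}{ds}\big|_0 H_f(s)\big)\,u\,da_f$, and a boundary term $-\int_{\ptl\Sg}\big(\tfrac{d}{ds}\big|_0\escpr{X_s,\nu_s}\big)\,da_f$ (again the Jacobian and density factors do not contribute because their coefficient vanishes at $s=0$). All other terms coming from differentiating the Jacobians, the density, or the measure are killed by the vanishing factors.

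The two remaining computations are: (a) the first variation of the $f$-mean curvature, $\tfrac{d}{ds}\big|_0 H_f(s)$, for the normal component $u$; and (b) the first variation of the conormal angle, $\tfrac{d}{ds}\big|_0\escpr{X_s,\nu_s}$, at a point of $\ptl\Sg$. For (a) I would use the known formula from the weighted setting — e.g.\ as in \cite[Ch.~3]{bayle-thesis} or \cite{rcbm} — which gives, for the normal variation with speed $u$,
\[
\frac{d}{ds}\Big|_0 H_f(s)=-\Delta_{\Sg,f}u-\big(\mathrm{Ric}_f(N,N)+|\sg|^2\big)\,u,
\]
the point being that the standard Riemannian Jacobi-operator computation $(d/ds)|_0\,nH = \Delta_\Sg u + (\ric(N,N)+|\sg|^2)u$ gets corrected by $\escpr{\nabla_\Sg\psi,\nabla_\Sg u}$ and by $-\nabla^2\psi(N,N)\,u$ when one tracks the extra term $-\escpr{\nabla\psi,N}$ in $H_f$; together these produce exactly $-\Delta_{\Sg,f}u$ and the replacement of $\ric$ by $\ric_f$. (Here the tangential part of $X$ contributes only a Lie-derivative term that integrates against $u\,da_f$ to give, after integration by parts and use of $H_f=\text{const}$, no net contribution beyond what is already accounted for; this is the standard reduction to the normal component.) Plugging this into the interior term and applying the integration-by-parts formula \eqref{eq:ibp} converts $-\int_\Sg(-\Delta_{\Sg,f}u)\,u\,da_f$ into $\int_\Sg|\nabla_\Sg u|^2\,da_f - \int_{\ptl\Sg}u\,\tfrac{\ptl u}{\ptl\nu}\,dl_f$, yielding the first line of \eqref{eq:indo} plus a leftover boundary term $-\int_{\ptl\Sg}u\,\tfrac{\ptl u}{\ptl\nu}\,dl_f$.

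For (b), the main point is a careful decomposition of $X$ near $\ptl\Sg$ using the orthonormal adapted frame $\{N,\nu,\xi,\eta\}$, where $\xi$ is the inner unit normal to $\ptl M$ in $M$; along $\ptl\Sg$ at $s=0$ orthogonality gives $\nu=\xi$ and $N=\eta$, i.e.\ $N$ is tangent to $\ptl M$ along $\ptl\Sg$. Differentiating $\escpr{X_s,\nu_s}$ and using that $X$ is tangent to $\ptl M$ along $\ptl\Sg$, that $u=\escpr{X,N}$, and standard formulas for the variation of a unit normal, one extracts the term $-\mathrm{II}(N,N)\,u^2$ together with a term that exactly cancels the leftover $-\int_{\ptl\Sg}u\,\tfrac{\ptl u}{\ptl\nu}\,dl_f$ from step (a): indeed $\tfrac{d}{ds}\big|_0\escpr{X_s,\nu_s}$ contains $\escpr{\nabla_X X,\nu}$-type terms whose normal-derivative piece reproduces $\tfrac{\ptl u}{\ptl\nu}$. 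Collecting everything gives
\[
(A_f+H_f\,V_f)''(0)=\int_\Sg\big(|\nabla_\Sg u|^2-(\mathrm{Ric}_f(N,N)+|\sg|^2)u^2\big)\,da_f-\int_{\ptl\Sg}\mathrm{II}(N,N)\,u^2\,dl_f=\indo_f(u,u).
\]
I expect step (b) — the boundary-term bookkeeping, in particular verifying that the $\tfrac{\ptl u}{\ptl\nu}$ contributions cancel and isolating precisely $\mathrm{II}(N,N)u^2$ (with the correct sign convention for $\mathrm{II}$, computed with respect to the inner unit normal $\xi$) — to be the main obstacle, since it requires tracking how $\nu_s$ tilts away from $\xi$ as the boundary moves along $\ptl M$ and how the second fundamental form $\mathrm{II}$ of $\ptl M$ enters through $\escpr{\nabla_N N,\xi}$ along $\ptl\Sg$. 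The interior computation (a) is essentially the classical Jacobi-operator argument adapted to the density, which is routine given the references.
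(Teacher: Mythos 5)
Your overall strategy is exactly the one the paper follows: write the first variation at parameter $s$, differentiate at $s=0$ using that the factors $H_f(s)-H_f$ and $\escpr{X,\nu}$ vanish there (so the Jacobian and density derivatives drop out), reduce to the normal component via the constancy of $H_f$, and then feed in the derivative of the $f$-mean curvature from \cite{rcbm} for the interior term and the Ros--Souam-type formula \eqref{eq:bd} from \cite[Lem.~4.1~(2)]{ros-souam} for $\escpr{X_s,\nu_s}'(0)$, finishing with the integration by parts formula \eqref{eq:ibp}. Your identification of step (b) as the delicate point is also exactly where the paper leans on the cited lemma.

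There is, however, a genuine sign problem in your key interior formula. With the paper's convention $H_f=-\divv_{\Sg,f}N$, the correct statement (equation \eqref{eq:hfprima}) is
\[
H_f'(0)=\mathcal{L}_f(u)=+\,\Delta_{\Sg,f}u+\big(\ric_f(N,N)+|\sg|^2\big)\,u,
\]
whereas you wrote its negative; a quick sanity check on an expanding round sphere in $\rr^{n+1}$ confirms the $+$ sign. You then compensate by misapplying \eqref{eq:ibp}: with your formula the interior term is $-\int_\Sg H_f'(0)\,u\,da_f=\int_\Sg u\,\Delta_{\Sg,f}u\,da_f+\int_\Sg(\ric_f(N,N)+|\sg|^2)u^2\,da_f$, and \eqref{eq:ibp} turns $\int_\Sg u\,\Delta_{\Sg,f}u\,da_f$ into $-\int_\Sg|\nabla_\Sg u|^2\,da_f-\int_{\ptl\Sg}u\,(\ptl u/\ptl\nu)\,dl_f$, not $+\int_\Sg|\nabla_\Sg u|^2\,da_f-\int_{\ptl\Sg}u\,(\ptl u/\ptl\nu)\,dl_f$ as you claim; moreover the curvature term would then appear with a $+$ sign, opposite to what \eqref{eq:indo} requires. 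Carried out consistently, your stated $H_f'(0)$ yields the \emph{negative} of the first line of $\indo_f(u,u)$, so the two errors cancel only by accident. With the correct sign, the leftover boundary term from the interior computation is $+\int_{\ptl\Sg}u\,(\ptl u/\ptl\nu)\,dl_f$, and it is cancelled by the $-\int_{\ptl\Sg}u\,(\ptl u/\ptl\nu)\,dl_f$ coming from $-\int_{\ptl\Sg}\escpr{X_s,\nu_s}'(0)\,dl_f$ with $\escpr{X_s,\nu_s}'(0)=u\{\ptl u/\ptl\nu+\text{II}(N,N)\,u\}$. Once these signs are straightened out, your argument coincides with the paper's proof.
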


\begin{proof}
First observe that the conormal vector $\nu$ to $\ptl\Sg$ coincides with the inner unit normal $\xi$ to $\ptl M$ along $\ptl\Sg$ by the orthogonality condition in Corollary~\ref{cor:stationary} (ii). Let us denote by $N_s$ the unit normal along $\Sg_s:=\varphi_s(\Sg)$ which is compatible with the orientations of $\Sg$ and $M$. By using Lemma~\ref{lem:1st} we obtain
\[
(A_f+H_f\,V_f)'(s)=-\int_{\Sg_s}(H_f)_s\,u_s\,(da_f)_s+
H_f\int_{\Sg_s}u_s\,(da_f)_s-\int_{\ptl\Sg_s}\escpr{X_s,\nu_s}\,(dl_f)_s,
\]
where $(H_f)_s$ is the $f$-mean curvature of $\Sg_s$, $(X_s)_p:=(\ptl\varphi/\ptl s)(s,p)$, $u_s:=\escpr{X_s,N_s}$ and $\nu_s$ is the conormal vector to $\ptl\Sg_s$. By differentiating into the previous equality and using that $\escpr{X,\nu}=0$ along $\ptl\Sg$, we get
\begin{equation}
\label{eq:2nd1}
(A_f+H_f\,V_f)''(0)=-\int_\Sg H_f'(0)\,u\,da_f-\int_{\ptl\Sg}\escpr{X_s,\nu_s}'(0)\,dl_f,
\end{equation}
where the primes in $H_f'(0)$ and $\escpr{X_s,\nu_s}'(0)$ denote differentiation along the curve $s\mapsto\varphi_s(p)$. On the one hand, the derivative $H_f'(0)$ was computed in the proof of \cite[Prop.~3.6]{rcbm} for the case of normal variations, see also \cite[Re.~3.7]{rcbm}. By taking into account that the $f$-mean curvature of $\Sg$ is constant, we have
\begin{equation}
\label{eq:hfprima}
H_f'(0)=\mathcal{L}_f(u):=\Delta_{\Sg,f}\,u+(\ric_f(N,N)+|\sg|^2)\,u,
\end{equation}
where $\Delta_{\Sg,f}$ is the $f$-Laplacian relative to $\Sg$ defined in \eqref{eq:deltaf}. On the other hand, the derivative $\escpr{X_s,\nu_s}'(0)$ can be computed as in \cite[Lem.~4.1~(2)]{ros-souam}, so that we obtain
\begin{equation}
\label{eq:bd}
\escpr{X_s,\nu_s}'(0)=u\,\left\{\frac{\ptl u}{\ptl\nu}+\text{II}(N,N)\,u\right\},
\end{equation}
where $\ptl u/\ptl\nu$ is the derivative of $u$ with respect to $\nu$. For further reference, it is worth noting that
\begin{equation}
\label{eq:mimo}
\escpr{\xi,N_s}'(0)=-\frac{\ptl u}{\ptl\nu}-\text{II}(X,N)-\sg(X^\top,\nu),
\end{equation}
where $X^\top$ is the tangent projection of $X$. This is an immediate consequence of equality
\begin{equation}
\label{eq:normal}
N_s'(0)=D_{X^\top}N-\nabla_\Sg u,
\end{equation}
which is proved in \cite[Lem.~4.1~(1)]{ros-souam}. By using \eqref{eq:hfprima} and \eqref{eq:bd}, equation \eqref{eq:2nd1} reads
\[
(A_f+H_f\,V_f)''(0)=\ind_f(u,u),
\]
where we define
\begin{equation*}
\ind_f(v,w):=-\int_\Sg v\,\mathcal{L}_f(w)\,da_f
-\int_{\ptl\Sg}v\,\left\{\frac{\ptl w}{\ptl\nu}+\text{II}(N,N)\,w\right\}dl_f.
\end{equation*}
Finally, an application of the integration by parts formula in \eqref{eq:ibp} yields $\ind_f(u,u)=\indo_f(u,u)$. This proves the claim.
\end{proof}

Following the terminology in \cite{bdce} we call \emph{$f$-Jacobi operator} of $\Sg$ to the second order linear operator $\mathcal{L}_f$ in \eqref{eq:hfprima}. Note that $\mathcal{L}_f$ coincides with the derivative of the $f$-mean curvature function along the variation. The \emph{$f$-index form} of $\Sg$ is the symmetric bilinear form $\indo_f$ on $C^\infty_0(\Sg)$ defined in \eqref{eq:indo}. By using formula \eqref{eq:ibp} we get $\ind_f(v,w)=\indo_f(v,w)$ for any $u,v\in C^\infty_0(\Sg)$. In particular, the symmetry of $\indo_f$ gives us the equality
\begin{equation*}
\int_\Sg \left\{v\,\mathcal{L}_f(w)-w\,\mathcal{L}_f(v)\right\}da_f
=\int_{\ptl\Sg}\left\{w\,\frac{\ptl v}{\ptl\nu}-v\,\frac{\ptl w}{\ptl\nu}\right\}dl_f,
\end{equation*} 
for any two functions $v,w\in C^\infty_0(\Sg)$.

Let $\Sg$ be an $f$-stationary hypersurface of constant $f$-mean curvature $H_f$. We say that $\Sg$ is \emph{strongly $f$-stable} if we have $(A_f+H_f\,V_f)''(0)\geq 0$ for any variation of $\Sg$. We say that $\Sg$ is \emph{$f$-stable} if $A_f''(0)\geq 0$ for any volume-preserving variation. For a strongly $f$-stationary hypersurface, to be strongly $f$-stable is the analogous property satisfied by free boundary stable minimal hypersurfaces in Riemannian manifolds.

By using the orthogonality condition in Corollary~\ref{cor:stationary} (ii) and the arguments given in \cite[Lem.~(2.2)]{bdce}, any function $u\in C^\infty_0(\Sg)$ with $\int_\Sg u\,da_f=0$ is the normal component of the velocity vector associated to a volume-preserving variation of $\Sg$. As a consequence, we can deduce the following result from Proposition~\ref{prop:2nd}.

\begin{corollary}
\label{cor:stable}
Let $M$ be a smooth oriented Riemannian manifold endowed with a density $f=e^\psi$. Consider a smooth oriented $f$-stationary hypersurface $\Sg$ immersed in $M$ with $\emph{int}(\Sg)\sub\emph{int}(M)$ and $\ptl\Sg\sub\ptl M$. Let $\indo_f$ be the index form of $\Sg$ defined in \eqref{eq:indo}. Then, we have
\begin{itemize}
\item[(i)] $\Sg$ is strongly $f$-stable if and only if $\indo_f(u,u)\geq 0$, for any $u\in C^\infty_0(\Sg)$.
\item[(ii)] $\Sg$ is $f$-stable if and only if $\indo_f(u,u)\geq 0$, for any $u\in C^\infty_0(\Sg)$ with $\int_\Sg u\,da_f=0$.
\end{itemize} 
\end{corollary}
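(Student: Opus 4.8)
The plan is to derive both equivalences directly from the second variation formula of Proposition~\ref{prop:2nd}, using that an $f$-stationary hypersurface automatically has constant $f$-mean curvature $H_f$ and meets $\partial M$ orthogonally along $\partial\Sg$ by Corollary~\ref{cor:stationary}, so that Proposition~\ref{prop:2nd} applies and $\nu=\xi$ along $\partial\Sg$. In each item, the ``easy'' implication comes from inserting the normal component $u:=\escpr{X,N}$ of a variation's velocity vector into $(A_f+H_f V_f)''(0)=\indo_f(u,u)$, and the ``hard'' implication from producing, for a prescribed $u\in C^\infty_0(\Sg)$, a variation of $\Sg$ whose velocity vector has normal component $u$.

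For (i): if $\indo_f\geq 0$ on $C^\infty_0(\Sg)$, then for any variation the velocity vector $X$ has compact support, so $u=\escpr{X,N}\in C^\infty_0(\Sg)$ and Proposition~\ref{prop:2nd} gives $(A_f+H_f V_f)''(0)=\indo_f(u,u)\geq 0$; hence $\Sg$ is strongly $f$-stable. Conversely, given $u\in C^\infty_0(\Sg)$, I would first use orthogonality to note that $N$ is tangent to $\partial M$ along $\partial\Sg$ (since $\nu=\xi$ and $\nu\perp N$), so $uN$ is tangent to $\partial M$ there; I would then extend $uN$ to a compactly supported vector field on $M$ tangent to $\partial M$ and take its flow, which preserves $\partial M$ and hence $\text{int}(M)$, obtaining an admissible variation with normal velocity $u$. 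Strong $f$-stability and Proposition~\ref{prop:2nd} then give $\indo_f(u,u)\geq 0$.

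For (ii): along a volume-preserving variation $V_f(s)$ is constant, so Lemma~\ref{lem:1st} gives $\int_\Sg u\,da_f=V_f'(0)=0$, and $V_f''(0)=0$ yields $A_f''(0)=(A_f+H_f V_f)''(0)=\indo_f(u,u)$ by Proposition~\ref{prop:2nd}. Thus, if $\indo_f(u,u)\geq 0$ for every $u\in C^\infty_0(\Sg)$ with $\int_\Sg u\,da_f=0$, then $A_f''(0)\geq 0$ for every volume-preserving variation, i.e.\ $\Sg$ is $f$-stable. For the reverse implication, given $u\in C^\infty_0(\Sg)$ with $\int_\Sg u\,da_f=0$, I would invoke the construction of \cite[Lem.~(2.2)]{bdce} (as recalled in the paragraph preceding the corollary), which together with the orthogonality of $\Sg$ and $\partial M$ yields a volume-preserving variation with normal velocity $u$; then $\indo_f(u,u)=A_f''(0)\geq 0$.

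The main obstacle is the construction of variations realizing a prescribed normal velocity while keeping $\varphi_s(\partial\Sg)\subset\partial M$. For unconstrained variations this is handled by flowing a vector field tangent to $\partial M$; in the volume-preserving case one must additionally annihilate the first-order variation of $V_f$ by introducing an extra parameter and applying the implicit function theorem, which is exactly \cite[Lem.~(2.2)]{bdce} once orthogonality has been used to guarantee that $uN$ is tangent to $\partial M$ along $\partial\Sg$. Granted this, the corollary is a formal consequence of Proposition~\ref{prop:2nd} and Lemma~\ref{lem:1st}.
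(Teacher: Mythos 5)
Your proposal is correct and follows essentially the same route as the paper, which deduces the corollary from Proposition~\ref{prop:2nd} together with the observation (stated in the paragraph preceding the corollary) that, by the orthogonality condition of Corollary~\ref{cor:stationary}~(ii) and the construction of \cite[Lem.~(2.2)]{bdce}, every $u\in C^\infty_0(\Sg)$ (with $\int_\Sg u\,da_f=0$ in the volume-preserving case) is the normal component of an admissible variation. Your additional remarks on extending $uN$ tangentially to $\ptl M$ and on $V_f''(0)=0$ for volume-preserving variations just make explicit what the paper leaves implicit.
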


\begin{example}
\label{ex:2}
Let $M=0\cone\mathcal{D}$ be a cone over a smooth region $\mathcal{D}$ of the unit sphere $\sph^n$ in $\rrn$. Suppose that $f=e^\psi$ is a smooth radial density on $M$ and denote by $\Sg$ the intersection with $M$ of a round sphere of radius $r$ centered at $0$. By following the computations in \cite[Thm.~3.10]{rcbm} we see that the $f$-index form associated to $\Sg$ is given by
\[
\indo_f(u,u)=f(r)\left[\int_\Sg\left(|\nabla_\Sg u|^2-|\sg|^2\,u^2\right)da
-\int_{\ptl\Sg}\text{II}(N,N)\,u^2\,dl+\int_\Sg\psi''(r)\,u^2\,da\right].
\]
In general, we cannot expect that $\Sg$ is $f$-stable. For example, if $M$ is a half-space and $f$ is strictly log-concave, then we can move $\Sg$ by translations along a fixed direction of $\ptl M$ to find a function $u$ with $\int_\Sg u\,da_f=0$ and $\indo_f(u,u)<0$. However, if $M$ is convex and $f$ is log-convex, then $\Sg$ is $f$-stable. To see this we take a function $u\in C^\infty(\Sg)$ with $\int_\Sg u\,da_f=0$. On the one hand, we have $\int_\Sg u\,da=0$ and we can use that $\Sg$ is free boundary stable in $M$ with Euclidean density \cite{cones} to deduce that the sum of the two first terms in $\indo_f(u,u)$ is nonnegative.  On the other hand, the log-convexity of $f$ implies $\psi''(r)\geq 0$. So we get $\indo_f(u,u)\geq 0$, as claimed.  As in \cite{rcbm} this fact might suggest that in a Euclidean solid convex cone endowed with a smooth, radial, log-convex density, any round sphere centered at the origin intersected with the cone minimizes the interior weighted area among all the hypersurfaces in the cone enclosing the same weighted volume.
\end{example}

\begin{example}
\label{ex:3}
Let $M$ be a Euclidean solid cone endowed with a $k$-homogeneous density $f=e^\psi$. In \cite[Ex.~4.7]{homostable} it was shown that the intersection with $M$ of a round sphere centered at $0$ is strongly $f$-stable if and only if $k\leq -n$. By assuming that $M$ is convex and the Bakry-\'Emery-Ricci tensor satisfies $\ric_f\geq (1/k)(d\psi\otimes d\psi)$ for some $k>0$, it was proved in \cite[Re.~1.5]{cabre3} that such spherical caps are $f$-stable (they are minimizers of the interior weighted area for fixed weighted volume). In fact, in \cite[Thm.~5.11]{homostable} it is shown that these are the unique compact $f$-stable hypersurfaces in $M$. Unduloidal examples of $f$-stable curves inside planar sectors with density $f(p)=|p|^k$, $k>0$, appear in \cite{dhht}. 
\end{example}

\section{Topology and rigidity of compact strongly stable hypersurfaces}
\label{sec:main}
\setcounter{equation}{0}

In this section we prove the main results of the paper. We will obtain topological and geometrical restrictions for strongly $f$-stable hypersurfaces under certain curvature and boundary assumptions on the ambient manifold. Our statements and proofs are inspired by previous results for the Riemannian case, see \cite{cai-galloway}, \cite{bbn}, \cite{nunes}, \cite{micallef}, \cite{fbr}, \cite{ambrozio}, and for hypersurfaces with empty boundary in manifolds with density, see \cite{fan}, \cite{liu} and \cite{espinar}. 

We will use the same notation as in the previous section. For a given oriented hypersurface $\Sg$ in a Riemannian manifold $M$ with boundary $\ptl M$, we denote by $N$, $\nu$ and $\sg$ the unit normal to $\Sg$, the conormal vector to $\ptl\Sg$, and the second fundamental form of $\Sg$, respectively.  We denote by $\xi$ and $\text{II}$ the inner unit normal to $\ptl M$ and the second fundamental form of $\ptl M$ with respect to $\xi$.

\subsection{Non-negative Bakry-\'Emery-Ricci curvature and locally convex boundary}
\label{subsec:ricci}

Let $M$ be a smooth oriented Riemannian manifold endowed with a density $f=e^\psi$. Given a point $p\in M$, we define the \emph{Bakry-\'Emery-Ricci curvature} of $M$ at $p$ as the quadratic form $v\in T_pM\mapsto\ric_f(v,v)$, where $\ric_f$ is the $2$-tensor in \eqref{eq:fricci}. Observe that, for an $f$-stationary hypersurface $\Sg$ immersed in $M$, the $f$-index form $\indo_f$ introduced in \eqref{eq:indo} involves the normal Bakry-\'Emery-Ricci curvature $\ric_f(N,N)$. As a consequence, if $\ric_f(N,N)\geq 0$ on $\Sg$ and $\ptl M$ is \emph{locally convex}, i.e., $\text{II}$ is always positive semidefinite, then $\indo_f(u,u)$ contains non-positive terms, and so the stability condition in Corollary~\ref{cor:stable} (i) becomes more restrictive. In fact, by inserting $u=1$ inside $\indo_f$ we can prove the following simple but interesting result.

\begin{lemma}
\label{lem:easy}
Let $M$ be a smooth oriented Riemannian manifold with locally convex boundary, and endowed with a density $f=e^\psi$ of non-negative Bakry-\'Emery-Ricci curvature. Consider a smooth, compact, oriented, $f$-stationary hypersurface $\Sg$ immersed in $M$ with $\emph{int}(\Sg)\sub\emph{int}(M)$ and $\ptl\Sg\sub\ptl M$. Then, $\Sg$ is strongly $f$-stable if and only if $\Sg$ is totally geodesic, $\emph{Ric}_f(N,N)=0$ on $\Sg$, and $\emph{II}(N,N)=0$ along $\ptl\Sg$.
\end{lemma}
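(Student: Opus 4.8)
The plan is to test the stability inequality from Corollary~\ref{cor:stable}~(i) on the constant function $u=1$, which is admissible since $\Sg$ is compact. Plugging $v=w=1$ into the $f$-index form \eqref{eq:indo} kills the gradient term and yields
\[
\indo_f(1,1)=-\int_\Sg\big(\ric_f(N,N)+|\sg|^2\big)\,da_f-\int_{\ptl\Sg}\text{II}(N,N)\,dl_f.
\]
Under the hypotheses $\ric_f\geq 0$, $\text{II}\geq 0$ (local convexity), the three integrands $\ric_f(N,N)$, $|\sg|^2$ and $\text{II}(N,N)$ are all nonnegative, so $\indo_f(1,1)\leq 0$. If $\Sg$ is strongly $f$-stable then $\indo_f(1,1)\geq 0$, forcing $\indo_f(1,1)=0$; since each of the three nonnegative integrals must then vanish, we get $|\sg|^2\equiv 0$ (i.e. $\Sg$ totally geodesic), $\ric_f(N,N)\equiv 0$ on $\Sg$, and $\text{II}(N,N)\equiv 0$ along $\ptl\Sg$. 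This gives the ``only if'' direction.

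For the converse, suppose $\Sg$ is totally geodesic, $\ric_f(N,N)=0$ on $\Sg$ and $\text{II}(N,N)=0$ along $\ptl\Sg$. First note $H_f=nH-\escpr{\nabla\psi,N}$; since $\sg\equiv 0$ we have $H\equiv 0$, but $H_f$ need not vanish a priori. However, $\Sg$ is assumed $f$-stationary, hence by Corollary~\ref{cor:stationary} its $f$-mean curvature is a constant $H_0$ (and $\Sg$ meets $\ptl M$ orthogonally), so the index form is the correct second-variation object. Under the stated vanishing conditions, \eqref{eq:indo} reduces for any $u\in C_0^\infty(\Sg)$ to
\[
\indo_f(u,u)=\int_\Sg|\nabla_\Sg u|^2\,da_f\geq 0,
\]
so $\Sg$ is strongly $f$-stable by Corollary~\ref{cor:stable}~(i). (In the $f$-stationary-but-not-strongly case, one still checks $\indo_f(u,u)\geq 0$, so $\Sg$ is also $f$-stable; the statement as phrased concerns strong $f$-stability, which is what we need.)

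I do not expect a genuine obstacle here — the argument is a direct substitution into the second variation formula of Proposition~\ref{prop:2nd} together with the sign hypotheses, exactly the ``insert $u=1$'' idea advertised before the statement. The only point requiring a word of care is the reduction ``$\indo_f(1,1)=0$ implies each integral vanishes'': this uses that $da_f$ and $dl_f$ are positive measures (as $f>0$) and that all three integrands are pointwise nonnegative and continuous, so a vanishing sum of nonnegative integrals forces each integrand to be identically zero. One should also record, for later use in the rigidity theorems, that the conclusion $\text{II}(N,N)=0$ together with $\text{II}\geq 0$ means $N$ lies in the kernel of $\text{II}$ along $\ptl\Sg$, i.e. $\text{II}(N,\cdot)=0$ there; this follows from the fact that a positive semidefinite bilinear form vanishes on a vector iff that vector is in its radical.
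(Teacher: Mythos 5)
Your proof is correct and follows exactly the route the paper intends: the authors do not even write out a separate proof of Lemma~\ref{lem:easy}, but simply remark that it follows ``by inserting $u=1$ inside $\indo_f$,'' which is precisely your argument (with the converse direction, the admissibility of $u=1$ by compactness, and the positivity of $da_f$, $dl_f$ made explicit). No gaps.
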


The simplest case where strongly $f$-stable hypersurfaces in the conditions of Lemma~\ref{lem:easy} appear is the Riemannian product $\rr\times\Sg$, where $\Sg$ is a compact manifold of non-negative Ricci curvature. In fact, for any density $f=e^\psi$ with $\psi(s,p)=as+b$, the horizontal slices $\{s\}\times\Sg$ are strongly $f$-stable hypersurfaces.  As it is shown in \cite[Sect.~1]{micallef} other examples exist where the ambient manifold $M$ does not split along $\Sg$. However, by assuming that $\Sg$ is embedded and locally weighted area-minimizing we can obtain a rigidity result in the same spirit of the one proved by Liu \cite[Thm.~1]{liu} for hypersurfaces with empty boundary. Before stating the theorem we need two definitions. We say that $\Sg$ is \emph{locally weighted area-minimizing} if for any variation $\varphi:(-\eps,\eps)\times\Sg\to M$ of $\Sg$, the associated weighted area functional satisfies $A_f(0)\leq A_f(s)$ for any $s$ in a small open interval containing the origin. We say that $\Sg$ \emph{minimizes the weighted area in its isotopy class} if, for any variation $\varphi$ of $\Sg$ such that the maps $\varphi_s:\Sg\to\Sg_s$ are diffeomorphisms, then $A_f(0)\leq A_f(s)$, for any $s\in (-\eps,\eps)$.

\begin{theorem}
\label{th:ricci}
Let $M$ be a smooth oriented Riemannian manifold with locally convex boundary, and endowed with a density $f=e^\psi$ of non-negative Bakry-\'Emery-Ricci curvature. Suppose that $\Sg$ is a smooth, compact, oriented, locally weighted area-minimizing hypersurface embedded in $M$ with $\emph{int}(\Sg)\sub\emph{int}(M)$ and $\ptl\Sg\sub\ptl M$. Then, $\Sg$ is totally geodesic, and there is an open neighborhood of $\Sg$ in $M$ which is isometric to a Riemannian product $(-\eps_0,\eps_0)\times\Sg$. Moreover, if $M$ is complete and $\Sg$ minimizes the weighted area in its isotopy class, then the Riemannian product $\rr\times\Sg$ is an isometric covering of $M$.
\end{theorem}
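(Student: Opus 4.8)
The plan is to follow the geometric strategy outlined at the end of the introduction. First I would observe that a locally weighted area-minimizing hypersurface is in particular strongly $f$-stable, so by Lemma~\ref{lem:easy} the hypersurface $\Sg$ is totally geodesic, $\ric_f(N,N)=0$ on $\Sg$, and $\text{II}(N,N)=0$ along $\ptl\Sg$. In particular $\Sg$ is strongly $f$-stationary, so it has vanishing $f$-mean curvature and meets $\ptl M$ orthogonally (Corollary~\ref{cor:stationary}). The next step is to invoke Proposition~\ref{prop:deformation} (cited in the introduction) to produce, via the implicit function theorem, an open neighborhood $\Om$ of $\Sg$ foliated by strongly $f$-stationary hypersurfaces $\Sg_s$, $s\in(-\eps_0,\eps_0)$, with $\Sg_0=\Sg$, $\text{int}(\Sg_s)\sub\text{int}(M)$ and $\ptl\Sg_s\sub\ptl M$; here the normal component $u_s$ of the velocity vector solves the Jacobi-type equation making $(H_f)_s$ spatially constant, and one normalizes so that $u_0>0$. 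Each $\Sg_s$ is diffeomorphic to $\Sg$, so it is an admissible competitor for the local minimization property.

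The heart of the argument is to compute the derivative of the $f$-mean curvature $(H_f)_s$ of the leaves and show it has a sign. Writing $h(s):=(H_f)_s$, which is constant on each $\Sg_s$, I would use formula \eqref{eq:hfprima} (applied on $\Sg_s$, where $h(s)$ is constant) to get $h'(s)=\Delta_{\Sg_s,f}u_s+(\ric_f(N_s,N_s)+|\sg_s|^2)\,u_s$. Integrating against $da_f$ over $\Sg_s$ and using the integration by parts formula \eqref{eq:ibp} together with the orthogonality $\nu_s=\xi$ along $\ptl\Sg_s$, the Laplacian term contributes a boundary integral $-\int_{\ptl\Sg_s}(\ptl u_s/\ptl\nu_s)\,dl_f$; one then needs the Neumann-type boundary condition coming from the free boundary constraint, namely $\ptl u_s/\ptl\nu_s=-\text{II}(N_s,N_s)\,u_s$ along $\ptl\Sg_s$ (this is the content of \eqref{eq:mimo}/\eqref{eq:bd} reorganized, using that $X_s$ stays tangent to $\ptl M$). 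Substituting, and using $\ric_f\geq0$, $|\sg_s|^2\geq0$, $\text{II}\geq0$, $u_s>0$, yields $h'(s)\geq0$ for $s\geq0$ small (and $\leq0$ for $s\leq0$), hence since $h(0)=0$ the leaves $\Sg_s$ with $s>0$ have $H_f\le 0$ and those with $s<0$ have $H_f\ge 0$. Comparing with the area functional via Lemma~\ref{lem:1st}, $\tfrac{d}{ds}A_f(\Sg_s)=-\int_{\Sg_s}h(s)\,u_s\,da_f$ (the boundary term vanishes by orthogonality), so $A_f(\Sg_s)$ is non-increasing in $|s|$. Local area-minimality of $\Sg_0$ forces $A_f(\Sg_s)$ to be constant, hence $h(s)\equiv0$, and then the inequality chain must be an equality: $\ric_f(N_s,N_s)=0$, $\sg_s\equiv0$ (each leaf totally geodesic), $\text{II}(N_s,N_s)=0$, and $\Delta_{\Sg_s,f}u_s=0$, so by \eqref{eq:ibp} with $u_1=u_2=u_s$ the function $u_s$ is constant on each leaf (and $u_s\equiv1$ after rescaling the parameter).

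From here the metric splitting follows. Since every leaf is totally geodesic and the normal speed $u_s\equiv1$ is constant, the unit normal field $N$ to the foliation is parallel: indeed $\nabla_V N=0$ for $V$ tangent to $\Sg_s$ because $\sg_s\equiv0$, and $\nabla_N N=0$ because the leaves are level sets of a function with unit gradient moving at constant speed (equivalently, \eqref{eq:normal} gives $N_s'(0)=-\nabla_{\Sg_s}u_s=0$). A parallel unit vector field on $\Om$ integrates to a local Riemannian product structure $\Om\cong(-\eps_0,\eps_0)\times\Sg$ with the product metric, via the normal exponential map of $\Sg$; this is the classical de Rham-type argument. For the global statement, assuming $M$ complete and $\Sg$ minimizing in its isotopy class, one runs a continuation argument: the set of $s$ for which the normal exponential map is an isometry onto a product neighborhood is open (by the local result applied at $\Sg_s$, which is again minimizing in its isotopy class) and closed (by completeness and a limiting argument, the leaves $\Sg_s$ remaining embedded compact totally geodesic hypersurfaces), hence the normal exponential map $\rr\times\Sg\to M$ is a local isometry and, by completeness, a covering.

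The main obstacle, and the step requiring the most care, is establishing the correct free boundary (Neumann) condition $\ptl u_s/\ptl\nu_s=-\text{II}(N_s,N_s)\,u_s$ for the Jacobi field of the foliation and checking that Proposition~\ref{prop:deformation} actually delivers a foliation by \emph{strongly} $f$-stationary hypersurfaces with $\ptl\Sg_s\sub\ptl M$ — this is precisely where the present setting departs from Liu's empty-boundary case, since one cannot use normal geodesics leaving $\ptl\Sg$. A secondary point is justifying that equality $A_f(\Sg_s)\equiv A_f(\Sg)$ propagates the totally geodesic and $\ric_f(N,N)=0$ conditions to all nearby leaves simultaneously (not just to $\Sg_0$), which is what makes $N$ parallel on the whole neighborhood rather than just along $\Sg$; this uses that $h'(s)\geq 0$ with $h(0)=0$ and $A_f$ constant forces $h\equiv 0$ on the whole interval, not merely $h'(0)=0$.
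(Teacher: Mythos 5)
Your proposal follows essentially the same route as the paper's proof: Lemma~\ref{lem:easy} plus Corollary~\ref{cor:stationary} to get the initial rigidity of $\Sg$, the foliation by $f$-stationary leaves from Proposition~\ref{prop:deformation}, the monotonicity $H_f'(s)\geq 0$ obtained by integrating the Jacobi operator against the Neumann condition $\ptl u_s/\ptl\nu_s=-\text{II}(N_s,N_s)\,u_s$, the equality analysis forcing every leaf to be totally geodesic with $u_s$ constant, the parallel normal field yielding the isometric splitting via the normal exponential map, and the continuation argument for the covering statement. The only blemish is a sign slip in the intermediate claims (one has $H_f'(s)\geq 0$ for \emph{all} $s$, hence $H_f(s)\geq 0$ for $s>0$ and $H_f(s)\leq 0$ for $s<0$), which does not affect your correct conclusion that the weighted area is non-increasing in $|s|$.
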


Liu's proof of Theorem~\ref{th:ricci} when $\ptl\Sg=\emptyset$ uses the second variation formula for the area and the fact that $\Sg$ is locally area-minimizing to deduce that the local flow of normal geodesics leaving from $\Sg$ keeps constant the weighted area. However, in the case $\ptl\Sg\neq\emptyset$, we must deform $\Sg$ in a different way since a normal geodesic starting from $\ptl\Sg$ may leave the manifold $M$. This deformation is carried out in the next proposition for hypersurfaces satisfying the conclusions of Lemma~\ref{lem:easy}. 

\begin{proposition}
\label{prop:deformation}
Let $M$ be a smooth oriented Riemannian manifold endowed with a density $f=e^\psi$. Consider a smooth, compact, oriented, $f$-stationary hypersurface $\Sg$ immersed in $M$ with $\emph{int}(\Sg)\sub\emph{int}(M)$ and non-empty boundary $\ptl\Sg\sub\ptl M$. If $\Sg$ is totally geodesic, $\emph{Ric}_f(N,N)=0$ on $\Sg$ and $\emph{II}(N,N)=0$ along $\ptl\Sg$, then there is a variation $\varphi:(-\eps_0,\eps_0)\times\Sg\to M$ of $\Sg$ with velocity vector $X=N$ on $\Sg$, and such that any hypersurface $\Sg_s:=\varphi_s(\Sg)$ is $f$-stationary. Moreover, if $\Sg$ is embedded, then $\Om:=\varphi\big((-\eps_0,\eps_0)\times\Sg\big)$ is an open neighborhood of $\Sg$ in $M$ and $\varphi:(-\eps_0,\eps_0)\times\Sg\to\Om$ is a diffeomorphism.
\end{proposition}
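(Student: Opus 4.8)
The plan is to construct the variation via the implicit function theorem, by finding for each small $s$ a hypersurface $\Sg_s$ that is a normal graph over $\Sg$ with prescribed (vanishing) $f$-mean curvature and free boundary condition. Concretely, fix a tubular-neighborhood type map $F\colon(-\delta,\delta)\times\Sg\to M$ with $F(0,\cdot)=\varphi_0$, $(\ptl F/\ptl s)(0,p)=N_p$, and $F(s,p)\in\ptl M$ whenever $p\in\ptl\Sg$; such an $F$ exists because $\Sg$ meets $\ptl M$ orthogonally (Corollary~\ref{cor:stationary}(ii)), so one may use the normal exponential map in the interior and flow along $\ptl M$ near the boundary, gluing with a partition of unity. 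For $u\in C^{2,\alpha}(\Sg)$ small, let $\Sg_u$ be the hypersurface $p\mapsto F(u(p),p)$, and define the operator
\[
\Phi(u):=\big(H_f(\Sg_u),\,\escpr{N_u,\xi}\big|_{\ptl\Sg}\big)\in C^{0,\alpha}(\Sg)\times C^{1,\alpha}(\ptl\Sg),
\]
where $N_u$ is the chosen unit normal to $\Sg_u$ and $\xi$ the inner unit normal to $\ptl M$. Since $\Sg$ is $f$-stationary, $\Phi(0)=(0,0)$.

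The key step is to compute the linearization $D\Phi(0)$ and show it is an isomorphism. By \eqref{eq:hfprima}, the first component linearizes to $\mathcal{L}_f(u)=\Delta_{\Sg,f}u+(\ric_f(N,N)+|\sg|^2)u$, which under our hypotheses (totally geodesic, $\ric_f(N,N)=0$) reduces to $\Delta_{\Sg,f}u$; by \eqref{eq:mimo}, the boundary component linearizes to $-\ptl u/\ptl\nu-\mathrm{II}(X,N)-\sg(X^\top,\nu)$, and since $X=uN$ is normal ($X^\top=0$) and $\mathrm{II}(N,N)=0$ along $\ptl\Sg$, this is simply $-\ptl u/\ptl\nu$. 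Thus $D\Phi(0)u=(\Delta_{\Sg,f}u,\,-\ptl u/\ptl\nu|_{\ptl\Sg})$, the Neumann Laplacian for the weighted metric $da_f$. Fredholm theory for this elliptic boundary value problem gives that $D\Phi(0)$ has kernel and cokernel equal to the constants; to obtain a genuine isomorphism I would restrict $\Phi$ to the hyperplane $\{u:\int_\Sg u\,da_f=\text{const}\}$ and correct the target by the constant $f$-mean curvature mode, exactly as in the standard CMC-foliation construction (e.g. \cite{ye}-type arguments, or the reference \cite{micallef}). Applying the implicit function theorem then yields a $C^1$ family $s\mapsto u_s\in C^{2,\alpha}(\Sg)$ with $u_0=0$, $(\ptl u_s/\ptl s)(0)=1$ (after normalizing so the velocity is $N$), and each $\Sg_s:=\Sg_{u_s}$ having constant $f$-mean curvature and meeting $\ptl M$ orthogonally, hence $f$-stationary by Corollary~\ref{cor:stationary}(ii); elliptic regularity upgrades $\varphi$ to be smooth in $(s,p)$.

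For the last sentence: if $\Sg$ is embedded, then for $\eps_0$ small the maps $\varphi_s$ remain embeddings with images close to $\Sg$, and since the velocity $(\ptl\varphi/\ptl s)(0,\cdot)=N$ is transverse to $\Sg$, the map $\varphi\colon(-\eps_0,\eps_0)\times\Sg\to M$ is an immersion at $s=0$, hence (shrinking $\eps_0$) a diffeomorphism onto an open neighborhood $\Om$ of $\Sg$ in $M$. The main obstacle I anticipate is purely bookkeeping: setting up the function spaces and the neighborhood map $F$ so that the free-boundary (Neumann-type) condition is built in correctly, and handling the one-dimensional kernel/cokernel of the constants so that the implicit function theorem applies cleanly — the geometry (vanishing of the obstructing curvature terms) is precisely what makes the boundary contribution to the linearization reduce to the harmless Neumann term.
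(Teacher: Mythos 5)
Your proposal follows essentially the same route as the paper: both deform $\Sg$ along an ambient vector field tangent to $\ptl M$ and equal to $N$ on $\Sg$, apply the implicit function theorem to the pair $\big(H_f,\escpr{N,\xi}|_{\ptl\Sg}\big)$, use the hypotheses to reduce the linearization to the weighted Neumann problem $\big(\Delta_{\Sg,f}w,-\ptl w/\ptl\nu\big)$, and dispose of the constants kernel/cokernel by restricting to mean-zero functions and projecting out the average of $H_f$ in the target. The one point you gloss over ("after normalizing") that the paper makes explicit is why the velocity is exactly $N$: differentiating $\Phi(s,u(s))=0$ shows the normal speed solves the homogeneous Neumann problem, hence is a constant, which the mean-zero normalization forces to equal $1$ — a detail, not a different method.
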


\begin{proof}
We adapt to weighted manifolds the arguments in \cite[Prop.~10]{ambrozio}. Fix a smooth vector field $Y$ on $M$ such that $Y=N$ on $\Sg$ and $Y$ is tangent to $\ptl M$. We denote by $\{\phi_s\}$ the associated one-parameter group of diffeomorphisms. For fixed $\alpha\in (0,1)$ we can find numbers $\tau>0$ and $\delta>0$ such that, for any pair $(s,u)$ with $s\in (-\tau,\tau)$ and $u$ in the open ball $B_\delta(0)$ of the H\"older space $C^{2,\alpha}(\Sg)$, the set $\Sg_{u+s}:=\{\phi_{u(p)+s}(p)\,;\,p\in\Sg\}$ is an immersed $C^{2,\alpha}$ hypersurface with $\text{int}(\Sg_{u+s})\sub\text{int}(M)$ and $\ptl\Sg_{u+s}\sub\ptl M$. Moreover, if $\Sg$ is embedded, then $\Sg_{u+s}$ is also embedded. Let us denote $E:=\{u\in C^{2,\alpha}(\Sg)\,;\,\int_\Sg u\,da_f=0\}$ and $F:=\{u\in C^{0,\alpha}(\Sg)\,;\,\int_\Sg u\,da_f=0\}$. Thus, we have a well-defined map $\Phi:(-\tau,\tau)\times (B_\delta(0)\cap E)\to F\times C^{1,\alpha}(\ptl\Sg)$ given by
\[
\Phi(s,u):=\left((H_f)_{u+s}-\frac{1}{A_f(\Sg)}\int_\Sg (H_f)_{u+s}\,\,da_f,\escpr{\xi,N_{u+s}}\right),
\]
where $N_{u+s}$ and $(H_f)_{u+s}$ denote the unit normal and the $f$-mean curvature of $\Sg_{u+s}$, respectively. Note that $\Phi(0,0)=(0,0)$ since $\Sg$ is $f$-stationary. In fact, a hypersurface $\Sg_{u+s}$ will be also $f$-stationary if and only if $\Phi(s,u)=(0,0)$. So, we try to apply the implicit function theorem to $\Phi$ at $(0,0)$. For any $w\in E$, we can construct the variation $\eta(s,p):=\phi_{sw(p)}(p)$, whose velocity vector equals $wN$ on $\Sg$. By using formulas \eqref{eq:hfprima} and \eqref{eq:mimo}, equalities $\ric_f(N,N)=|\sg|^2=\text{II}(N,N)=0$, and the divergence theorem in \eqref{eq:divthsup}, we get 
\begin{align*}
(d\Phi)_{(0,0)}(0,w)&=\left(\mathcal{L}_f(w)
-\frac{1}{A_f(\Sg)}\int_\Sg\mathcal{L}_f(w)\,da_f,-\frac{\ptl w}{\ptl\nu}
\right)
\\
&=\left(\Delta_{\Sg,f}(w)+\frac{1}{A_f(\Sg)}\int_{\ptl\Sg}\frac{\ptl w}{\ptl\nu}\,dl_f,-\frac{\ptl w}{\ptl\nu}\right),
\end{align*}
where $\Delta_{\Sg,f}$ is the $f$-Laplacian defined in \eqref{eq:deltaf}. Let us see that $(d\Phi)_{(0,0)}:\{0\}\times E\to F\times C^{1,\alpha}(\ptl\Sg)$ is an isomorphism. Take functions $h\in F$ and $k\in C^{1,\alpha}(\ptl\Sg)$. Then we have $\int_{\Sg}(h+\beta)\,da_f=\int_{\ptl\Sg}k\,dl_f$, where $\beta:=A_f(\Sg)^{-1}\int_{\ptl\Sg}k\,dl_f$. Now, we can apply existence and uniqueness of solutions for Poisson type equations with Neumann boundary conditions, see \cite[Sect.~3.3]{equations} and \cite{nardi}, to conclude that there is a unique function $w\in E$ solving the problem $\Delta_{\Sg,f}(w)=h+\beta$ on $\Sg$ and $\ptl w/\ptl\nu=-k$ along $\ptl\Sg$. As a consequence $(d\Phi)_{(0,0)}(0,w)=(h,k)$. Moreover, $w$ is unique in $E$ satisfying this property. 

Hence, we can find $\eps_0>0$ and a curve $u:(-\eps_0,\eps_0)\to B_\delta(0)\cap E$ such that $u(0)=0$ and $\Phi(s,u(s))=\Phi(0,0)=(0,0)$, for any $s\in (-\eps_0,\eps_0)$. In particular, any hypersurface $\Sg_{u(s)+s}$ is $f$-stationary. Finally, we define $\varphi:(-\eps_0,\eps_0)\times\Sg\to M$ as $\varphi(s,p):=\phi_{\mu(s,p)}(p)$, where $\mu(s,p):=s+u(s)(p)$. Note that $\varphi(0,p)=\phi_0(p)=p$ for any $p\in\Sg$, and so $\varphi$ is a variation of $\Sg$. For any $s\in (-\eps_0,\eps_0)$ we have $\Sg_s:=\varphi_s(\Sg)=\Sg_{u(s)+s}$, which is $f$-stationary. The velocity vector equals $X_p=(\ptl\mu/\ptl s)(0,p)\,N_p$. By differentiating with respect to $s$ in equality $\Phi(s,u(s))=(0,0)$, and using again \eqref{eq:hfprima} and \eqref{eq:mimo}, we deduce that $(\ptl\mu/\ptl s)(0,p)$ solves the problem $\Delta_{\Sg,f}(w)=0$ on $\Sg$ with $\ptl w/\ptl\nu=0$ along $\ptl\Sg$. Hence $(\ptl\mu/\ptl s)(0,p)$ is constant as a function of $p\in\Sg$. From equality $0=\int_\Sg u(s)\,da_f=\int_\Sg(\mu(s,p)-s)\,da_f$ we get $(\ptl\mu/\ptl s)(0,p)=1$, so that $X=N$ on $\Sg$. To finish the proof we apply the inverse function theorem and we find a smaller $\eps_0>0$ such that $\Om:=\varphi\big((-\eps_0,\eps_0)\times\Sg\big)$ is open in $M$ and the map $\varphi:(-\eps_0,\eps_0)\times\Sg\to\Om$ is a diffeomorphism.
\end{proof}

Now, we are ready to prove Theorem~\ref{th:ricci}.

\begin{proof}[Proof of Theorem~\ref{th:ricci}]
First note that $\Sg$ is strongly $f$-stationary and strongly $f$-stable. So, we can deduce by Corollary~\ref{cor:stationary} and Lemma~\ref{lem:easy} that the $f$-mean curvature $H_f$ of $\Sg$ vanishes, $\Sg$ is totally geodesic, and equalities $\ric_f(N,N)=\text{II}(N,N)=0$ hold. In particular, we can apply Proposition~\ref{prop:deformation} to obtain a variation $\varphi:(-s_0,s_0)\times\Sg\to M$ of $\Sg$ such that $X=N$ on $\Sg$, the hypersurfaces $\Sg_s:=\varphi_s(\Sg)$ are all $f$-stationary, and $\Om:=\varphi\big((-s_0,s_0)\times\Sg\big)$ is an open neighborhood of $\Sg$ in $M$ diffeomorphic to $(-s_0,s_0)\times\Sg$. 

Let us prove that the variation $\varphi$ does not increase the area, i.e., $A_f(s)\leq A_f(0)$ for any $s$ in a small open interval containing $0$. We will use the subscript $s$ to denote geometric functions and vectors associated to $\Sg_s$. We define $(X_s)_p:=(\ptl\varphi/\ptl s)(s,p)$ and $u_s:=\escpr{X_s,N_s}$. Since $u_0=1$, we can assume by continuity that $u_s>0$ on $\Sg_s$ for any $s\in (-s_0,s_0)$. On the other hand, as any $\Sg_s$ is $f$-stationary, we deduce by Corollary~\ref{cor:stationary} (ii) that $\escpr{X_s,\nu_s}=0$ along $\ptl\Sg_s$. In particular, equation~\eqref{eq:bd} yields $\ptl u_s/\ptl\nu_s+\text{II}(N_s,N_s)\,u_s=0$ along $\ptl\Sg_s$. Let $H_f(s)$ be the function that maps any $s\in (-s_0,s_0)$ to the constant $f$-mean curvature of $\Sg_s$. In order to show that $\varphi$ does not increase the area it suffices, by Lemma~\ref{lem:1st}, to see that $H'_f(s)\geq 0$ for any $s\in (-s_0,s_0)$. Observe that $H_f'(s)=(\mathcal{L}_f)_s(u_s)$, where $(\mathcal{L}_f)_s$ is the $f$-Jacobi operator on $\Sg_s$ defined in \eqref{eq:hfprima}. As a consequence
\begin{align*}
H_f'(s)\,A_f(s)&=\int_{\Sg_s}H_f'(s)\,(da_f)_s=\int_{\Sg_s}(\mathcal{L}_f)_s(u_s)\,(da_f)_s
\\
&=\int_{\Sg_s}\left\{\Delta_{\Sg_s,f}(u_s)+\big(\ric_f(N_s,N_s)+|\sg_s|^2\big)\,u_s\right\}(da_f)_s
\\
&=\int_{\ptl\Sg_s}\text{II}(N_s,N_s)\,u_s\,(dl_f)_s
+\int_{\Sg_s}\big(\ric_f(N_s,N_s)+|\sg_s|^2\big)\,u_s\,(da_f)_s,
\end{align*}
where we have used \eqref{eq:divthsup} and that $\ptl u_s/\ptl\nu_s=-\text{II}(N_s,N_s)\,u_s$. Therefore, the local convexity of $\ptl M$ and the nonnegativity of the Bakry-\'Emery-Ricci curvature give us $H_f'(s)\geq 0$. Moreover, if equality holds for some $s\in(-s_0,s_0)$, then $\Sg_s$ is totally geodesic, $\ric_f(N_s,N_s)=0$ on $\Sg_s$ and $\text{II}(N_s,N_s)=0$ along $\ptl\Sg_s$. In particular, the function $u_s$ solves the problem $\Delta_{\Sg_s,f}(u_s)=0$ on $\Sg$ with $\ptl u_s/\ptl\nu_s=0$ along $\ptl\Sg_s$, so that $u_s$ must be constant on $\Sg_s$.

Now, we can prove the conclusions of the theorem. By using that $\Sg$ is locally weighted area-minimizing, we get $A_f(s)=A_f(0)$ for any $s$ in a small open interval $J$ containing $0$. By Lemma~\ref{lem:1st}, this implies $H_f(s)\equiv 0$, and so $H'_f(s)=0$ for any $s\in J$. From the previous discussion we deduce that $\Sg_s$ is totally geodesic and $u_s$ is constant on $\Sg_s$ for any $s\in J$. By taking into account equation \eqref{eq:normal} and that $\Sg_s$ is totally geodesic, we infer that $N_s$ is a parallel vector field defined on $\Om$. So, the integral curves of $N_s$ are geodesics, and we can find $\eps_0>0$, and an open neighborhood $U_0\subeq\Om$ of $\Sg$ in $M$, such that the flow by normal geodesics $F:(-\eps_0,\eps_0)\times\Sg\to U_0$ given by 
$F(s,p):=\exp_p(sN_p)$ is a diffeomorphism. Moreover, $F$ is an isometry since $N_s$ is a Killing field.

Finally, let us assume that $M$ is complete and $\Sg$ minimizes the weighted area in its isotopy class. Note that the flow by normal geodesics $F$ is well-defined on $\rr\times\Sg$. Let $s_\infty$ be the supremum of the set $B$ of the numbers $s>0$ such that $F:[-s,s]\times\Sg\to M$ is an isometry onto its image. Suppose $s_\infty<+\infty$ and denote $\Sg_{\pm\infty}=F(\{s_{\pm\infty}\}\times\Sg)$. As in the first part of the proof, we can see that the variation $F$ does not increase the area. Thus, we would get $A_f(\Sg)=A_f(\Sg_{\pm\infty})$ by the minimization property of $\Sg$. Hence, the hypersurfaces $\Sg_{\pm\infty}$ would be locally weighted area-minimizing, and we may use the first conclusion of the theorem to find $\beta>0$ such that $s_\infty+\beta\in B$, a contradiction. So, we have $s_\infty=+\infty$. As a consequence $F:\rr\times\Sg\to M$ is a local isometry and, in particular, a covering map. This completes the proof. 
\end{proof}

\subsection{Non-negative Perelman scalar curvature and $f$-mean convex boundary}
\label{subsec:scalar}

Here we provide topological estimates and rigidity results for strongly $f$-stable surfaces in weighted manifolds by assuming a certain condition on the Perelman scalar curvature and weigh\-ted mean convexity of the boundary. We restrict ourselves to dimension $3$ since the Gauss-Bonnet theorem will be a key ingredient in our proofs.

We first recall some notation and introduce a definition. Let $M$ be a smooth oriented Riemannian $3$-manifold endowed with a density $f=e^\psi$. Recall that the Perelman scalar curvature is the function defined in \eqref{eq:fscalar} by $S_f:=S-2\Delta\psi-|\nabla\psi|^2$. We will say that the boundary $\ptl M$ is \emph{$f$-mean convex} if the $f$-mean curvature $(H_f)_{\ptl M}$ of $\ptl M$ introduced in \eqref{eq:fmc} is nonnegative when computed with respect to the inner unit normal $\xi$.

Now we can prove a first result, where we obtain a topological restriction for strongly $f$-stable surfaces where $S_f+H^2_f\geq 0$.

\begin{theorem}
\label{th:main}
Let $M$ be a smooth oriented Riemannian $3$-manifold endowed with a density $f=e^\psi$ such that $\ptl M$ is $f$-mean convex. Consider a smooth, compact, connected, oriented, $f$-stationary surface $\Sg$ immersed in $M$ with $\emph{int}(\Sg)\sub\emph{int}(M)$ and $\ptl\Sg\sub\ptl M$. If $\Sg$ is strongly $f$-stable and $S_f+H^2_f\geq 0$, then $\Sg$ has non-negative Euler characteristic. More precisely, we have 
\begin{enumerate}
\item[(i)] If $\ptl\Sg=\emptyset$, then $\Sg$ is a sphere or a torus.
\item[(ii)] If $\ptl\Sg\neq\emptyset$, then $\Sg$ is a disk or a cylinder. 
\end{enumerate}  
If the Euler characteristic vanishes, then $\Sg$ is flat and totally geodesic, the density $f$ is constant on $\Sg$, and $S_f+H^2_f=\emph{Ric}_f(N,N)=0$ on $\Sg$. Moreover, if $\ptl\Sg\neq\emptyset$, then it consists of two closed geodesics in $M$ where $\emph{II}(N,N)=(H_f)_{\ptl M}=0$.
\end{theorem}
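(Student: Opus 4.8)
The plan is to run the standard Schoen–Yau / Fischer-Colbrie–Schoen test-function argument, but adapted to the weighted setting and to the presence of boundary, exactly in the spirit of Ambrozio's Theorem for the Riemannian case. First I would plug the constant function $u\equiv 1$ into the stability inequality $\indo_f(1,1)\geq 0$ from Corollary~\ref{cor:stable}~(i), which gives
\[
\int_\Sg\big(\ric_f(N,N)+|\sg|^2\big)\,da_f+\int_{\ptl\Sg}\text{II}(N,N)\,dl_f\leq 0.
\]
Next I would rewrite $\ric_f(N,N)+|\sg|^2$ in terms of intrinsic quantities of $\Sg$. Using the Gauss equation for the $3$-manifold $M$, a Bochner-type identity expressing $\ric_f(N,N)$ through the scalar curvature $S$, the Gauss curvature $K$ of $\Sg$, $H$, $|\sg|^2$, and the relevant derivatives of $\psi$, together with the definitions \eqref{eq:fscalar} and \eqref{eq:fmc}, one arrives (after completing the square and absorbing the $\psi$-terms) at an inequality of the schematic form
\[
\int_{\ptl\Sg}\text{II}(N,N)\,dl_f+\int_\Sg\Big\{\tfrac12\big(S_f+H_f^2\big)+\tfrac12|\sg_0|^2+(\text{nonneg. gradient/normal terms})\Big\}\,da_f\leq\int_\Sg K\,da_f,
\]
where $\sg_0$ is the traceless part of $\sg$; here $da_f=f\,da$ but the weight can be handled by the usual trick of splitting $\nabla_\Sg\psi$ into tangential and normal parts and integrating by parts over $\Sg$, with the boundary term controlled because $\ptl\Sg$ meets $\ptl M$ orthogonally. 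The second boundary ingredient is the geodesic curvature: since $\nu=\xi$ along $\ptl\Sg$ by the orthogonality in Corollary~\ref{cor:stationary}~(ii), the geodesic curvature $\kappa_g$ of $\ptl\Sg$ in $\Sg$ equals $-\text{II}(\nu,\nu)=-\text{II}(\xi,\xi)$, and $\text{II}(\xi,\xi)$ together with $\text{II}(N,N)$ recombines into $(H_f)_{\ptl M}$ (the $f$-mean curvature of $\ptl M$) plus a $\psi$-derivative term, so that $f$-mean convexity of $\ptl M$ bounds $\int_{\ptl\Sg}\kappa_g\,dl$ from one side.

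Then I would invoke the Gauss–Bonnet theorem, $\int_\Sg K\,da+\int_{\ptl\Sg}\kappa_g\,dl=2\pi\chi(\Sg)$, and combine it with the two displayed inequalities. The upshot is $2\pi\chi(\Sg)\geq \int_\Sg(\text{nonnegative})\,da_f+\int_{\ptl\Sg}(\text{nonnegative})\,dl_f\geq 0$, using $S_f+H_f^2\geq 0$ and $f$-mean convexity of $\ptl M$. Hence $\chi(\Sg)\geq 0$, which for a compact connected oriented surface forces a sphere or torus when $\ptl\Sg=\emptyset$, and a disk or cylinder when $\ptl\Sg\neq\emptyset$ (since then $\chi(\Sg)=2-2g-b$ with $b\geq 1$ boundary components, so $\chi\geq 0$ gives $g=0$ and $b\in\{1,2\}$). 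This proves (i) and (ii).

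For the rigidity statement when $\chi(\Sg)=0$, I would trace back through every inequality used and force equality. Equality forces each nonnegative integrand to vanish: the traceless second fundamental form $\sg_0\equiv 0$ and the tangential gradient $\nabla_\Sg\psi\equiv 0$ and the normal-derivative term vanish, giving that $\Sg$ is totally umbilical with the right sign and in fact (combined with $H_f=\text{const}$ and $f$-stationarity, so $H_f\equiv 0$ after the $\psi$-terms are killed) totally geodesic; moreover $S_f+H_f^2=0$, $\ric_f(N,N)=0$, and $f$ is constant along $\Sg$. Feeding $f|_\Sg\equiv\text{const}$ and total geodesy back into Gauss–Bonnet with $\int_\Sg K=0$ (which follows since all other terms vanish) and the uniformization-type dichotomy shows $K\equiv 0$, so $\Sg$ is flat. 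In the boundary case, equality in the boundary terms forces $\text{II}(N,N)=0$ and $(H_f)_{\ptl M}=0$ along $\ptl\Sg$, hence $\kappa_g\equiv 0$, i.e. $\ptl\Sg$ is a union of geodesics of $\Sg$; and since $\Sg$ is totally geodesic, these are geodesics of $M$ as well. A flat compact surface with totally geodesic boundary and $b=2$ boundary components is a cylinder with exactly two boundary circles, each a closed geodesic, as claimed.

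The main obstacle I expect is bookkeeping the weight $f=e^\psi$ correctly: the clean Riemannian argument of Schoen–Yau and Ambrozio relies on $\int_\Sg K\,da$ appearing with no extra factor, whereas here the stability inequality is written with $da_f$. The key device is the pointwise identity relating $\ric_f(N,N)$ and $S_f$ in dimension $3$ — namely the combination $S_f+H_f^2-2\ric_f(N,N)$ equals $2K+|\sg_0|^2$ minus exact-divergence and squared-gradient terms of $\psi$ — so that after one integration by parts over $\Sg$ all the awkward $\psi$-terms either become manifestly signed squares, cancel against the $f$-mean convexity boundary contribution, or reassemble into $(H_f)_{\ptl M}$ on $\ptl\Sg$. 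Getting the constants and signs in that identity exactly right, and verifying that the $\ptl\Sg$ boundary term produced by the integration by parts is precisely the one needed to convert $\text{II}(\xi,\xi)$ into $(H_f)_{\ptl M}$, is the technical heart of the proof; everything else is the classical Gauss–Bonnet packaging.
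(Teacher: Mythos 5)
Your overall architecture (the Gauss-equation rearrangement of $\ric_f(N,N)+|\sg|^2$, the boundary identity recombining $\text{II}(N,N)$ and the geodesic curvature into $(H_f)_{\ptl M}$, then Gauss--Bonnet) is exactly the paper's, and the two identities you gesture at are precisely \eqref{eq:yo1} and \eqref{eq:yo2}. But there is a genuine gap at the very first step: you test stability with $u\equiv 1$. That choice leaves every integral weighted, and in particular your schematic inequality ends with $\int_\Sg K\,da_f=\int_\Sg Kf\,da$ on the right-hand side. No integration by parts or splitting of $\nabla\psi$ can strip the multiplicative factor $f$ off the Gauss curvature integrand, so Gauss--Bonnet never becomes applicable; worse, integrating $\int_\Sg(\Delta_\Sg\psi)\,da_f$ by parts produces an extra $-\tfrac{1}{2}\int_\Sg|\nabla_\Sg\psi|^2\,da_f$ with the wrong sign, and the $u\equiv1$ inequality has no gradient term to compensate it. The missing idea is the paper's choice of test function $u:=1/\sqrt{f}=e^{-\psi/2}$: then $u^2\,da_f=da$, $u^2\,dl_f=dl$ and $|\nabla_\Sg u|^2\,da_f=\tfrac{1}{4}|\nabla_\Sg\psi|^2\,da$, so $\indo_f(u,u)\geq 0$ delivers the \emph{unweighted} $\int_\Sg K\,da$ and $\int_{\ptl\Sg}h\,dl$ directly, the gradient terms combine into $-\tfrac14|\nabla_\Sg\psi|^2$ on the correct side, and the boundary terms assemble into $-\int_{\ptl\Sg}(H_f)_{\ptl M}\,dl$. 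This substitution, not the pointwise identity you single out, is the technical heart; without it the argument does not close.

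There is also a secondary gap in the equality case. Vanishing of the integrated nonnegative quantities yields $|\sg|\equiv 0$, $\nabla_\Sg\psi\equiv 0$, $S_f+H_f^2\equiv 0$ and $(H_f)_{\ptl M}\equiv 0$, but it does not by itself give the pointwise conclusions $\ric_f(N,N)=0$ and $\text{II}(N,N)=0$: with the other terms killed, \eqref{eq:yo1} only says $\ric_f(N,N)=-K$, so deducing $K\equiv 0$ ``because all other terms vanish'' is circular, and Gauss--Bonnet only gives $\int_\Sg K\,da=0$, not $K\equiv 0$. The paper closes this by noting that $\indo_f(u,u)=0$ for the nonnegative form $\indo_f$ forces $\indo_f(u,v)=0$ for all $v$, hence $\mathcal{L}_f(u)=0$ on $\Sg$ and $\ptl u/\ptl\nu+\text{II}(N,N)\,u=0$ along $\ptl\Sg$; since $u>0$ and $\nabla_\Sg u=0$, these give $\ric_f(N,N)=0$ and $\text{II}(N,N)=0$ pointwise, and only then do \eqref{eq:yo1} and \eqref{eq:yo2} yield $K=0$ and $h=0$. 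You should add this kernel argument to your rigidity discussion.
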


\begin{proof}
We first obtain two identities, one for the interior of $\Sg$ and another one for the boundary $\ptl\Sg$, that will be key ingredients to prove the claim. From the Gauss equation we get the following rearrangement already described in the proof of \cite[Thm.~5.1]{schoen-yau}
\[
\ric(N,N)+|\sg|^2=\frac{1}{2}\,S+2H^2+\frac{1}{2}\,|\sg|^2-K,
\]
where $H$ and $K$ denote the Riemannian mean curvature and the Gauss curvature of $\Sg$, respectively. Note also that
\[
\Delta\psi=\divv_\Sg\nabla\psi+(\nabla^2\psi)(N,N)=\Delta_\Sg\psi-2H\,\escpr{\nabla\psi,N}+(\nabla^2\psi)(N,N),
\]
where we have used $\nabla\psi=\nabla_\Sg\psi+\escpr{\nabla\psi,N}N$ and $\divv_\Sg N=-2H$ to obtain the second equality. Combining the two previous equations together with \eqref{eq:fricci}, \eqref{eq:fmc} and \eqref{eq:fscalar}, we deduce
\begin{equation}
\label{eq:yo1}
\ric_f(N,N)+|\sg|^2=\frac{1}{2}\,(S_f+H^2_f)+\frac{1}{2}\,(|\sg|^2+|\nabla_\Sg\psi|^2)-K+\Delta_\Sg\psi\quad\text{ on } \Sg.
\end{equation}
On the other hand, the fact that $\Sg$ is $f$-stationary implies, by Corollary~\ref{cor:stationary} (ii), that $\Sg$ meets $\ptl M$ orthogonally along the boundary curves. Thus, the inner unit normal $\xi$ of $\ptl M$ coincides with the conormal vector $\nu$. As a consequence $\text{II}(T,T)=h$, where $T$ is a unit tangent vector to $\ptl\Sg$ and $h$ is the geodesic curvature of $\ptl\Sg$ in $\Sg$. Therefore, we have 
\begin{equation}
\label{eq:yo2}
\text{II}(N,N)=2H_{\ptl M}-h \quad\text{ along }\ptl\Sg,
\end{equation}
where $H_{\ptl M}$ is the Riemannian mean curvature of $\ptl M$ with respect to $\xi$.

Now, we take the function $u:=1/\sqrt{f}$. The strong $f$-stability of $\Sg$ together with Corollary~\ref{cor:stable} (i) and the definition of $f$-index form in \eqref{eq:indo} gives us
\[
0\leq\indo_f(u,u)=\int_\Sg\left\{|\nabla_\Sg u|^2-\big(\ric_f(N,N)+|\sg|^2\big)\,u^2\right\}da_f-\int_{\ptl\Sg}\text{II}(N,N)\,u^2\,dl_f,
\]
where we understand that the boundary integral vanishes provided $\ptl\Sg=\emptyset$. Note that $u^2\,da_f=da$, $u^2\,dl_f=dl$ and $|\nabla_\Sg u|^2\,da_f=(1/4)\,|\nabla_\Sg\psi|^2\,da$. Therefore, by substituting equalities \eqref{eq:yo1} and \eqref{eq:yo2} into the previous expression, and taking into account that $\int_\Sg\Delta_\Sg\psi\,da=-\int_{\ptl\Sg}\escpr{\nabla\psi,\nu}\,dl$, we conclude
\begin{align}
\label{eq:yo3}
0\leq\indo_f(u,u)&=\int_\Sg\left\{\frac{-1}{4}\,|\nabla_\Sg\psi|^2-\frac{1}{2}\,(S_f+H_f^2)-\frac{1}{2}\,|\sg|^2\right\}da+\int_\Sg K\,da
\\
\nonumber
&+\int_{\ptl\Sg}\left\{-2H_{\ptl M}+\escpr{\nabla\psi,\nu}\right\}dl
+\int_{\ptl\Sg}h\,dl
\\
\nonumber
&\leq\int_{\Sg} K\,da-\int_{\ptl\Sg}(H_f)_{\ptl M}\,dl+2\pi\,\chi-\int_\Sg K\,da
\\
\nonumber
&\leq 2\pi\chi=2\pi\,(2-2g-m),
\end{align}
where we have used that $S_f+H_f^2\geq 0$ on $\Sg$, the Gauss-Bonnet theorem, and the $f$-mean convexity of $\ptl M$. We have also denoted by $\chi$, $g$ and $m$ the Euler characteristic, the genus and the number of boundary components of $\Sg$, respectively. From the previous inequality we easily deduce statements (i) and (ii) of the theorem.

Now, suppose $\chi=0$. Then, equality holds in \eqref{eq:yo3}, and so $\Sg$ is a totally geodesic surface such that $|\nabla_\Sg\psi|^2=0$, $S_f+H_f^2=0$ and $(H_f)_{\ptl M}=H_{\ptl M}=0$. Moreover, we also have $\indo_f(u,u)=0$. For any function $v\in C^\infty(\Sg)$ and any $s\in\rr$ we get
\begin{align*}
0\leq\indo_f(u+sv,u+sv)&=\indo_f(u,u)+2s\,\indo_f(u,v)+s^2\,\indo_f(v,v)
\\
&=2s\,\indo_f(u,v)+s^2\,\indo_f(v,v),
\end{align*}
since $\Sg$ is strongly $f$-stable and $\indo_f(u,u)=0$. This implies that $\indo_f(u,v)=0$ for any $v\in C^\infty(\Sg)$. Let $\mathcal{L}_f$ be the $f$-Jacobi operator defined in \eqref{eq:hfprima}. After applying the integration by parts formula in \eqref{eq:ibp}, we obtain
\[
0=\indo_f(v,u)=\ind_f(v,u)=-\int_\Sg v\,\mathcal{L}_f(u)\,da_f
-\int_{\ptl\Sg} v\left\{\frac{\ptl u}{\ptl\nu}+\text{II}(N,N)\,u\right\}dl_f,
\]
for any $v\in C^\infty(\Sg)$. From here we deduce $\mathcal{L}_f(u)=0$ on $\Sg$ and $\ptl u/\ptl\nu+\text{II}(N,N)\,u=0$ along $\ptl\Sg$. By taking into account that $u>0$ and that $\nabla_\Sg u=-(\nabla_\Sg\psi)/(2\sqrt{f})=0$, we conclude that $\ric_f(N,N)=0$ on $\Sg$ and $\text{II}(N,N)=0$ along $\ptl\Sg$. Finally, equations \eqref{eq:yo1} and \eqref{eq:yo2} give $K=0$ on $\Sg$ and $h=0$ along $\ptl\Sg$. This completes the proof. 
\end{proof}

\begin{example}
Consider the manifold $M_r=\{p\in\rr^{3}\,;\,|p|\geq r\}$ endowed with the Euclidean metric and the radial $k$-homogeneous density $f(p)=|p|^k$. It is easy to check, see the computations in \cite[Ex.~4.3]{homostable}, that the $f$-mean curvature of $\ptl M_r$ with respect to the inner unit normal equals $-(k+2)/r$. Moreover, the Perelman scalar curvature is given by $S_f(p)=-k\,|p|^{-2}\,(k+2)$, see \cite[Lem.~3.6]{homostable} for details. Hence, in the case $k=-2$, we have that $M_r$ has $f$-mean convex boundary and vanishing Perelman scalar curvature. By applying Theorem~\ref{th:main} we conclude that any compact strongly $f$-stable hypersurface in $M_r$ is topologically a sphere or a disk.
\end{example}

\begin{example}
Theorem~\ref{th:main} shows that the existence of strongly $f$-stable tori or cylinders is very restrictive. However, we can find some situations where they appear. Take the manifold $M=\rr\times\sph^1\times [-1,1]$ endowed with the Riemannian product metric and the density $f(s,\theta,t)=e^s$.  It is easy to check that $M$ has $f$-mean convex boundary and that any horizontal cylinder $\Sg=\{s\}\times\sph^1\times [-1,1]$ is $f$-stationary with $S_f+H^2_f=-1+1=0$. Moreover, $\Sg$ is totally geodesic with $\ric_f(N,N)=0$ and $\text{II}(N,N)=0$, which implies by \eqref{eq:indo} and Corollary~\ref{cor:stable} (i) that $\Sg$ is strongly $f$-stable. Similarly, in the Riemannian product $M=\rr\times\sph^1\times\sph^1$ with density $f(s,\theta,t)=e^{-s}$ any horizontal torus $\{s\}\times\sph^1\times\sph^1$ is strongly $f$-stable.
\end{example}

Observe that Theorem~\ref{th:main} applies when $\ptl\Sg=\emptyset$, $S_f\geq 0$ on $M$, and $H_f=0$ on $\Sg$, thus generalizing previous results in \cite[Thm.~2.1]{fan} and \cite[Prop.~8.1]{espinar}. In the next corollary we particularize Theorem~\ref{th:main} when $S_f\geq 0$ and $\ptl\Sg\neq\emptyset$. This provides an extension of the results in \cite[Thm.~1.2]{fbr} and \cite[Prop.~6]{ambrozio} for stable free boundary minimal surfaces in Riemannian $3$-manifolds. 

\begin{corollary}
\label{cor:main}
Let $M$ be a smooth oriented Riemannian $3$-manifold endowed with a density $f=e^\psi$ such that $S_f\geq 0$ on $M$ and $\ptl M$ is $f$-mean convex. Consider a smooth, compact, connected, oriented, $f$-stationary surface $\Sg$ immersed in $M$ with $\emph{int}(\Sg)\sub\emph{int}(M)$ and non-empty boundary $\ptl\Sg\sub\ptl M$. If $\Sg$ is strongly $f$-stable, then $\Sg$ is either a disk or a totally geodesic flat cylinder. In the last case, the density $f$ is constant on $\Sg$, $S_f=H_f=\emph{Ric}_f(N,N)=0$ on $\Sg$, $\ptl\Sg$ consists of two closed geodesics in $M$, and $\emph{II}(N,N)=(H_f)_{\ptl M}=0$ along $\ptl\Sg$.
\end{corollary}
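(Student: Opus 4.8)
The plan is to obtain the corollary as a direct specialization of Theorem~\ref{th:main}, the only extra work being to upgrade its conclusion under the stronger hypothesis $S_f\geq 0$ on $M$. First I would check that the hypotheses of Theorem~\ref{th:main} hold: since $\Sg$ is $f$-stationary, its $f$-mean curvature $H_f$ is a constant along $\Sg$ by Corollary~\ref{cor:stationary}; and since $S_f\geq 0$ on all of $M$, in particular $S_f\geq 0$ on $\Sg$, so $S_f+H_f^2\geq 0$ on $\Sg$. Theorem~\ref{th:main} then applies, and because $\ptl\Sg\neq\emptyset$ its part (ii) gives that $\Sg$ is either a disk or a cylinder, which is the dichotomy in the statement.

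It remains to treat the cylinder case, i.e., when the Euler characteristic of $\Sg$ vanishes. Here Theorem~\ref{th:main} already yields that $\Sg$ is flat and totally geodesic, that $f$ is constant on $\Sg$, that $S_f+H_f^2=\ric_f(N,N)=0$ on $\Sg$, and that $\ptl\Sg$ consists of two closed geodesics of $M$ along which $\text{II}(N,N)=(H_f)_{\ptl M}=0$. The only assertion of the corollary not yet covered is the \emph{separate} vanishing of $S_f$ and of $H_f$ on $\Sg$. To get it I would combine $S_f+H_f^2=0$ with $S_f\geq 0$: since $H_f$ is constant, $S_f=-H_f^2$ is a nonpositive constant on $\Sg$, and the bound $S_f\geq 0$ forces it to be zero, whence $S_f\equiv 0$ and $H_f=0$ on $\Sg$.

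I do not expect any genuine obstacle, since the statement is essentially a reading of Theorem~\ref{th:main} in a special case; the one point requiring a little care is that $H_f$ is only known a priori to be constant (not zero), so the conclusion $H_f=0$ really does use the sign condition $S_f\geq 0$ rather than merely $S_f+H_f^2\geq 0$. One could alternatively re-run the computation \eqref{eq:yo3} from the proof of Theorem~\ref{th:main} keeping track of the extra $-\tfrac12 S_f$ term, but invoking the theorem directly is cleaner.
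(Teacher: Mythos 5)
Your proposal is correct and matches the paper's intent exactly: the paper gives no separate proof of Corollary~\ref{cor:main}, presenting it as a direct particularization of Theorem~\ref{th:main}, with the separate vanishing of $S_f$ and $H_f$ following from $S_f+H_f^2=0$ together with $S_f\geq 0$ and $H_f^2\geq 0$, just as you argue.
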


The existence of strongly stable cylinders in the conditions of the previous corollary cannot be discarded. The model situation where they appear is a Riemannian product $\rr\times\Sg$ with constant density, where $\Sg$ is the compact cylinder $\sph^1\times [a,b]$ endowed with the Euclidean metric of $\rr^3$. In fact, any horizontal slice $\{s\}\times\Sg$ provides a strongly stable cylinder. This is not the unique example we may give, in the sense that the existence of such cylinders does not imply that $M$ locally splits as a Riemannian product, see \cite[Sect.~1]{micallef}. However, by assuming that the cylinder is embedded and locally area-minimizing, we can obtain a rigidity result in the same spirit of \cite[Thm~1]{cai-galloway}, \cite[Thm.~8.1]{espinar} and \cite[Thm.~7]{ambrozio}. 

\begin{theorem}
\label{th:main2}
Let $M$ be a smooth oriented Riemannian $3$-manifold endowed with a density $f=e^\psi$ such that $S_f\geq 0$ on $M$ and $\ptl M$ is $f$-mean convex. Suppose that there is a locally weighted area-minimizing smooth oriented cylinder $\Sg$ embedded in $M$ with $\emph{int}(\Sg)\sub\emph{int}(M)$ and $\ptl\Sg\sub\ptl M$. 
Then, $\Sg$ is flat with geodesic boundary, and there is an open neighborhood of $\Sg$ in $M$ which is isometric to a Riemannian product $(-\eps_0,\eps_0)\times\Sg$ with constant density. Moreover, if $M$ is complete and $\Sg$ minimizes the weighted area in its isotopy class, then the density $f$ is constant in $M$ and the Riemannian product $\rr\times\Sg$ is an isometric covering of $M$.
\end{theorem}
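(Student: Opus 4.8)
The plan is to follow the same strategy that worked for Theorem~\ref{th:ricci}, now using the extra structural information extracted in Corollary~\ref{cor:main} in place of Lemma~\ref{lem:easy}. First I would observe that a locally weighted area-minimizing cylinder $\Sg$ is in particular strongly $f$-stationary and strongly $f$-stable, so Corollary~\ref{cor:main} applies and tells us that $\Sg$ is totally geodesic and flat, $H_f=\text{Ric}_f(N,N)=S_f=0$ on $\Sg$, the density $f$ is constant along $\Sg$, $\ptl\Sg$ consists of two closed geodesics of $M$, and $\text{II}(N,N)=(H_f)_{\ptl M}=0$ along $\ptl\Sg$. Since $\Sg$ is totally geodesic with $\text{Ric}_f(N,N)=0$ on $\Sg$ and $\text{II}(N,N)=0$ along $\ptl\Sg$, the hypotheses of Proposition~\ref{prop:deformation} are met: we get a variation $\varphi:(-s_0,s_0)\times\Sg\to M$ with velocity $X=N$ on $\Sg$, all the hypersurfaces $\Sg_s:=\varphi_s(\Sg)$ $f$-stationary, and $\Om:=\varphi((-s_0,s_0)\times\Sg)$ an open neighborhood of $\Sg$ in $M$ diffeomorphic to $(-s_0,s_0)\times\Sg$.

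Next I would run the monotonicity argument on the area functional, but with the second-variation identity specialized to surfaces. As in the proof of Theorem~\ref{th:ricci}, write $u_s:=\escpr{X_s,N_s}>0$ (by continuity, shrinking $s_0$); since each $\Sg_s$ is $f$-stationary, $\escpr{X_s,\nu_s}=0$ on $\ptl\Sg_s$ and hence, by \eqref{eq:bd}, $\ptl u_s/\ptl\nu_s=-\text{II}(N_s,N_s)\,u_s$. Then $H_f'(s)=(\mathcal L_f)_s(u_s)$ gives, after integrating over $\Sg_s$ and using \eqref{eq:divthsup},
\[
H_f'(s)\,A_f(s)=\int_{\ptl\Sg_s}\text{II}(N_s,N_s)\,u_s\,(dl_f)_s+\int_{\Sg_s}\big(\text{Ric}_f(N_s,N_s)+|\sg_s|^2\big)\,u_s\,(da_f)_s.
\]
Now I rewrite the bulk integrand using \eqref{eq:yo1} and the boundary integrand using \eqref{eq:yo2} applied to $\Sg_s$ — here is where the dimension $3$ enters. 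After testing the stability of $\Sg_s$ with $u=1/\sqrt{f}$ in the spirit of \eqref{eq:yo3}, together with Gauss--Bonnet on the cylinder $\Sg_s$ ($\chi=0$) and the $f$-mean convexity of $\ptl M$ and the sign $S_f\geq 0$, one obtains $H_f'(s)\geq 0$, with equality forcing $\Sg_s$ to be totally geodesic and flat with $\text{Ric}_f(N_s,N_s)=0$, $f$ constant on $\Sg_s$, $\text{II}(N_s,N_s)=0$ and geodesic boundary; in particular $u_s$ solves $\Delta_{\Sg_s,f}u_s=0$ with $\ptl u_s/\ptl\nu_s=0$, hence $u_s$ is constant. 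I would state this monotonicity-with-rigidity as a lemma paralleling the corresponding step in the proof of Theorem~\ref{th:ricci}.

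The local conclusion then follows exactly as before: since $\Sg$ is locally weighted area-minimizing, $A_f(s)\equiv A_f(0)$ on a small interval $J$, so $H_f(s)\equiv 0$ and $H_f'(s)\equiv 0$ on $J$; therefore each $\Sg_s$ is totally geodesic and flat, $f$ is constant on each $\Sg_s$, and $u_s$ is constant. Equation \eqref{eq:normal} with $\Sg_s$ totally geodesic gives $N_s'(0)=0$, so $N_s$ is parallel on $\Om$, its integral curves are geodesics, and the normal exponential map $F(s,p):=\exp_p(sN_p)$ is an isometry from some $(-\eps_0,\eps_0)\times\Sg$ onto an open neighborhood $U_0$ of $\Sg$; moreover $N_s$ is Killing. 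That $f$ is constant on all of $U_0$, not merely on each slice, follows because $\nabla_{\Sg_s}\psi=0$ on every slice (the tangential gradient vanishes) and $\escpr{\nabla\psi,N_s}$ is the constant $f$-mean curvature $H_f(s)=0$ of $\Sg_s$, so $\nabla\psi\equiv 0$ on $U_0$, i.e. $f$ is constant there. The global statement is the standard continuation argument: with $M$ complete and $\Sg$ minimizing in its isotopy class, the set of $s>0$ for which $F:[-s,s]\times\Sg\to M$ is an isometry onto its image is nonempty, open (by the local result applied to the end slices, which are themselves locally area-minimizing since $F$ does not increase area), and closed, hence all of $\rr$; so $F:\rr\times\Sg\to M$ is a local isometry, thus a covering, and $f$ is constant on $M$. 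The main obstacle I anticipate is the bookkeeping in the monotonicity step: one must carry the full Gauss/Gauss--Bonnet rearrangement \eqref{eq:yo1}--\eqref{eq:yo3} along the moving family $\Sg_s$ and verify that all the terms that were merely nonnegative in the proof of Theorem~\ref{th:main} continue to have the right sign (in particular that $(H_f)_{\ptl M}\geq 0$ is used correctly at $\Sg_s$ and that the Gauss--Bonnet boundary term for a cylinder contributes $0$), and then identify precisely the equality case; the rest is a faithful transcription of the arguments already used for Theorems~\ref{th:main} and \ref{th:ricci}.
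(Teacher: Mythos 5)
Your overall architecture (Corollary~\ref{cor:main} $\to$ Proposition~\ref{prop:deformation} $\to$ monotonicity of $H_f(s)$ $\to$ splitting) is the paper's, but the one step you defer as ``bookkeeping'' is exactly where your plan breaks down, in two ways. First, you propose to ``test the stability of $\Sg_s$ with $u=1/\sqrt f$ in the spirit of \eqref{eq:yo3}''. The surfaces $\Sg_s$ produced by Proposition~\ref{prop:deformation} are only known to be $f$-stationary for $s\neq 0$; strong $f$-stability is available only at $s=0$, so the inequality $\indo_f(u,u)\geq 0$ cannot be invoked on $\Sg_s$. The whole point of the monotonicity step is to extract a sign for $H_f'(s)$ from the pointwise identity $H_f'(s)=(\mathcal L_f)_s(u_s)$ alone, with no stability assumption on the deformed leaves. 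Second, starting from the weighted identity
\[
H_f'(s)\,A_f(s)=\int_{\ptl\Sg_s}\text{II}(N_s,N_s)\,u_s\,(dl_f)_s+\int_{\Sg_s}\big(\ric_f(N_s,N_s)+|\sg_s|^2\big)\,u_s\,(da_f)_s
\]
and substituting \eqref{eq:yo1} does not let you reach Gauss--Bonnet: the Gauss curvature then appears as $\int_{\Sg_s}K_s\,u_sf\,da_s$, and $u_sf$ is not constant on $\Sg_s$ for $s\neq 0$ (that is only known a posteriori, in the equality case). The same problem affects the term $\int_{\Sg_s}(\Delta_{\Sg_s}\psi)\,u_sf\,da_s$, which is no longer a pure divergence.

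The paper's actual device, which is the genuinely new ingredient relative to Theorem~\ref{th:ricci} and which your proposal does not contain, is to divide by $u_s$ and integrate against the \emph{unweighted} Riemannian measure, i.e.\ to estimate $\int_{\Sg_s}(\mathcal L_f)_s(u_s)/u_s\,da_s$ as in \eqref{eq:desi0}--\eqref{eq:desi2}. Unwinding $(\mathcal L_f)_s(u_s)/u_s$ produces the cross term $\escpr{\nabla_{\Sg_s}\psi,\nabla_{\Sg_s}u_s}/u_s$, which is absorbed by the elementary inequality \eqref{eq:ausi}; the term $\int_{\Sg_s}(\Delta_{\Sg_s}u_s)/u_s\,da_s$ is integrated by parts to yield $\int_{\Sg_s}|\nabla_{\Sg_s}u_s|^2/u_s^2\,da_s$ plus boundary terms controlled by \eqref{eq:yo2}; and the Gauss curvature now enters as the unweighted $\int_{\Sg_s}K_s\,da_s$, to which Gauss--Bonnet applies with $\chi(\Sg_s)=0$. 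Only then do $S_f\geq 0$ and $(H_f)_{\ptl M}\geq 0$ give $H_f'(s)\geq 0$, with equality forcing $\Sg_s$ totally geodesic, $u_s$ constant, and (via equality in \eqref{eq:ausi}) $f$ constant on $\Sg_s$. Your endgame --- $\nabla_{\Sg_s}\psi=0$ on each slice together with $\escpr{\nabla\psi,N_s}=-H_f(s)=0$ giving constancy of $f$ on the neighborhood, and the continuation argument for the covering statement --- is correct and matches the paper, but as written the central inequality $H_f'(s)\geq 0$ is not established.
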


\begin{proof}
The scheme of the proof is the same as in Theorem~\ref{th:ricci}. First note that $\Sg$ is strongly $f$-stationary and strongly $f$-stable. So, we can deduce by Corollary~\ref{cor:main} that the $f$-mean curvature $H_f$ of $\Sg$ vanishes, the density $f$ is constant on $\Sg$, the surface $\Sg$ is totally geodesic and flat with geodesic boundary, and equalities $\ric_f(N,N)=\text{II}(N,N)=0$ hold. In particular, we can apply Proposition~\ref{prop:deformation} to obtain a variation $\varphi:(-s_0,s_0)\times\Sg\to M$ of $\Sg$ such that $X=N$ on $\Sg$, the hypersurfaces $\Sg_s:=\varphi_s(\Sg)$ are all $f$-stationary, and $\Om:=\varphi\big((-s_0,s_0)\times\Sg\big)$ is an open neighborhood of $\Sg$ in $M$ diffeomorphic to $(-s_0,s_0)\times\Sg$. 

Let us prove that the variation $\varphi$ does not increase the area. We use the subscript $s$ for denoting the quantities associated to $\Sg_s$. Define $(X_s)_p:=(\ptl\varphi/\ptl s)(s,p)$ and $u_s:=\escpr{X_s,N_s}$. Since $u_0=1$, we can suppose that $u_s>0$ on $\Sg_s$ for any $s\in (-s_0,s_0)$. As $\Sg_s$ is $f$-stationary, we  infer by Corollary~\ref{cor:stationary} (ii) that $\escpr{X_s,\nu_s}=0$ along $\ptl\Sg_s$. Thus equation~\eqref{eq:bd} yields $\ptl u_s/\ptl\nu_s+\text{II}(N_s,N_s)\,u_s=0$ along $\ptl\Sg_s$. For any $s\in (-s_0,s_0)$, let $H_f(s)$ be the constant $f$-mean curvature of $\Sg_s$. To prove the claim it suffices, by Lemma~\ref{lem:1st}, to show that $H'_f(s)\geq 0$ for any $s\in (-s_0,s_0)$. Note that $H_f'(s)=(\mathcal{L}_f)_s(u_s)$, where $(\mathcal{L}_f)_s$ is the $f$-Jacobi operator on $\Sg_s$ defined in \eqref{eq:hfprima}. Hence, we have
\begin{equation}
\label{eq:desi0}
H_f'(s)\int_{\Sg_s}\frac{1}{u_s}\,da_s=\int_{\Sg_s}\frac{(\mathcal{L}_f)_s(u_s)}{u_s}\,da_s.
\end{equation}
Let us see that the integrals at the right-hand side are nonnegative for any $s\in (-s_0,s_0)$. First note that
\begin{equation}
\label{eq:ausi}
-2\,\escpr{\nabla_{\Sg_s}\psi,\nabla_{\Sg_s} u_s}\leq |\nabla_{\Sg_s}\psi|^2\,u_s+\frac{|\nabla_{\Sg_s} u_s|^2}{u_s}.
\end{equation}
By taking into account \eqref{eq:hfprima}, \eqref{eq:deltaf} and \eqref{eq:yo1}, we get
\begin{align}
\label{eq:desi1}
\frac{(\mathcal{L}_f)_s(u_s)}{u_s}&=\frac{\Delta_{\Sg_s} u_s+\escpr{\nabla_{\Sg_s}\psi,\nabla_{\Sg_s} u_s}}{u_s}+\ric_f(N_s,N_s)+|\sg_s|^2
\\
\nonumber
&\geq\frac{\Delta_{\Sg_s} u_s}{u_s}-\frac{|\nabla_{\Sg_s}u_s|^2}{2\,u_s^2}+\frac{1}{2}\,\big(S_f+H_f(s)^2+|\sg_s|^2\big)-K_s+\Delta_{\Sg_s}\psi.
\end{align}
On the other hand, we can use the divergence theorem together with equalities \eqref{eq:yo2} and $\ptl u_s/\ptl\nu_s=-\text{II}(N_s,N_s)u_s$, to obtain
\begin{align*}
\int_{\Sg_s}\frac{\Delta_{\Sg_s} u_s}{u_s}\,da_s&=\int_{\Sg_s}\frac{|\nabla_{\Sg_s} u_s|^2}{u_s^2}\,da_s-\int_{\ptl\Sg_s}\frac{1}{u_s}\frac{\ptl u_s}{\ptl\nu_s}\,dl_s
\\
&=\int_{\Sg_s}\frac{|\nabla_{\Sg_s} u_s|^2}{u_s^2}\,da_s+\int_{\ptl\Sg_s} 2H_{\ptl M}\,dl_s-\int_{\ptl\Sg_s}h_s\,dl_s.
\end{align*}
By integrating and substituting the previous information into \eqref{eq:desi1}, we deduce
\begin{align}
\label{eq:desi2}
\int_{\Sg_s}\frac{(\mathcal{L}_f)_s(u_s)}{u_s}\,da_s&\geq\frac{1}{2}\,\int_{\Sg_s}\left(\frac{|\nabla_{\Sg_s} u_s|^2}{u_s^2}+S_f+H_f(s)^2+|\sg_s|^2\right)da_s-\int_{\Sg_s} K_s\,da_s
\\
\nonumber
&+\int_{\ptl\Sg_s}(H_f)_{\ptl M}\,dl_s-\int_{\ptl\Sg_s}h_s\,dl_s
\\
\nonumber
&=\frac{1}{2}\,\int_{\Sg_s}\left(\frac{|\nabla_{\Sg_s} u_s|^2}{u_s^2}+S_f+H_f(s)^2+|\sg_s|^2\right)da_s
\\
\nonumber
&+\int_{\ptl\Sg_s}(H_f)_{\ptl M}\,dl_s-2\pi\chi(\Sg_s),
\end{align}
where we have applied the Gauss-Bonnet theorem. Finally, the fact that $\Sg_s$ is topologically a cylinder together with hypotheses $S_f\geq 0$ and $(H_f)_{\ptl M}\geq 0$, allows us to conclude that $\int_{\Sg_s}(\mathcal{L}_f)_s(u_s)/u_s\,da_s\geq 0$, as we claimed. If we have equality for some $s\in(-s_0,s_0)$, then $\Sg_s$ is totally geodesic and the function $u_s$ is constant on $\Sg_s$. Moreover, by \eqref{eq:ausi} we get that the density $f$ is constant on $\Sg_s$.

Now, the first conclusion of the theorem follows as in Theorem~\ref{th:ricci} by using that $\Sg$ is locally weighted area-minimizing. In particular, $H_f(s)=0$ for any $s\in (-s_0,s_0)$, and so 
\[
0=H_f(s)=-\escpr{\nabla\psi,N_s}
\]
by equation \eqref{eq:fmc}. As a consequence, the density $f$ is constant in a neighborhood of $\Sg$. The second conclusion in the statement can also be deduced as in Theorem~\ref{th:ricci}.
\end{proof}

\begin{remark}
It is important to observe that our rigidity result in Theorem~\ref{th:ricci} does not follow from Theorem~\ref{th:main2} since the Perelman scalar curvature $S_f$ \emph{is not the trace} of $\ric_f$. This is a remarkable difference with respect to Riemannian geometry, where non-negative Ricci curvature implies non-negative scalar curvature. For example, in $\rr^3$ with the Gaussian density $f(p)=e^{-|p|^2}$, we have $\ric_f(v,v)=2|v|^2$ for any vector $v$, and $S_f(p)=12-4|p|^2$ for any $p\in\rr^3$. 
\end{remark}

\subsection{Perelman scalar curvature and area estimates}
\label{subsec:area}

The proof of Theorem~\ref{th:main} can be adapted to deduce upper and lower bounds for the weighted area of compact strongly stable surfaces. These area estimates involve a non-vanishing lower bound on the Perelman scalar curvature, and they are sharp, in the sense that the equality cases lead to rigidity results for area-minimizing surfaces. In this way we obtain results in the spirit of \cite[Thm.~3]{shen-zhu}, \cite{bbn}, \cite[Thm.~3 and Cor.~1]{nunes}, \cite[Thm.~1.2]{fbr}, \cite[Thms.~8 and 9]{ambrozio} for the Riemannian case, and of \cite[Sects.~8 and 9]{espinar} for surfaces with empty boundary in manifolds with density. The precise statements are the following. 

\begin{theorem}
\label{th:main3}
Let $M$ be a smooth oriented Riemannian $3$-manifold endowed with a density $f=e^\psi$ such that $\ptl M$ is $f$-mean convex and $S_f\geq S_0f$ on $M$ for some $S_0>0$. Consider a smooth, compact, connected, oriented, $f$-stationary surface $\Sg$ with $\emph{int}(\Sg)\sub\emph{int}(M)$ and non-empty boundary $\ptl\Sg\sub\ptl M$. If $\Sg$ is strongly $f$-stable, then $\Sg$ is topologically a disk with $A_f(\Sg)\leq 4\pi/S_0$. Moreover, if $\Sg$ is embedded, locally weighted area-minimizing, and $A_f(\Sg)=4\pi/S_0$, then $\Sg$ is a totally geodesic disk of constant Gauss curvature $(S_0f)/2$ bounded by geodesics, and there is an open neighborhood of $\Sg$ in $M$ which is isometric to a Riemannian product $(-\eps_0,\eps_0)\times\Sg$ with constant density. Finally, if $M$ is complete and $\Sg$ minimizes the weighted area in its isotopy class, then the density $f$ is constant in $M$ and the Riemannian product $\rr\times\Sg$ is an isometric covering of $M$.
\end{theorem}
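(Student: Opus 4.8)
The plan is to follow the scheme of Theorems~\ref{th:main} and~\ref{th:main2}, keeping track of the extra term produced by the strict lower bound $S_f\geq S_0f$. First I would insert the test function $u:=1/\sqrt{f}$ into the $f$-index form, exactly as in the proof of Theorem~\ref{th:main}: since $\Sigma$ is compact and strongly $f$-stable, Corollary~\ref{cor:stable}(i) gives $\indo_f(u,u)\geq 0$, and using $u^2\,da_f=da$, $u^2\,dl_f=dl$, $|\nabla_\Sigma u|^2\,da_f=\tfrac14|\nabla_\Sigma\psi|^2\,da$, the Gauss rearrangements~\eqref{eq:yo1} and~\eqref{eq:yo2}, the divergence theorem for $\Delta_\Sigma\psi$, and $f$-mean convexity of $\ptl M$, the chain in~\eqref{eq:yo3} carries over verbatim except that, instead of discarding $\tfrac12(S_f+H_f^2)$, I would estimate $S_f+H_f^2\geq S_f\geq S_0f$. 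This yields
\[
0\leq\indo_f(u,u)\leq 2\pi\chi(\Sigma)-\frac{S_0}{2}\,A_f(\Sigma),
\]
so $A_f(\Sigma)\leq 4\pi\chi(\Sigma)/S_0$. Since $A_f(\Sigma)>0$ and $S_0>0$ we get $\chi(\Sigma)>0$, and a compact connected orientable surface with non-empty boundary has $\chi\leq 1$, with equality only for the disk; hence $\Sigma$ is a disk and $A_f(\Sigma)\leq 4\pi/S_0$.

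Assume now that $\Sigma$ is embedded, locally weighted area-minimizing, and $A_f(\Sigma)=4\pi/S_0$. Then all the inequalities above are equalities. In particular $S_f+H_f^2=S_0f$, which combined with $S_f\geq S_0f$ forces $H_f=0$ and $S_f=S_0f$; moreover $|\sigma|^2\equiv 0$ (so $\Sigma$ is totally geodesic), $|\nabla_\Sigma\psi|^2\equiv 0$ (so $f$ is constant on $\Sigma$, whence $\nabla_\Sigma u=-\nabla_\Sigma\psi/(2\sqrt f)=0$), $\int_{\ptl\Sigma}(H_f)_{\ptl M}\,dl=0$ (so $(H_f)_{\ptl M}=0$ along $\ptl\Sigma$), and $\indo_f(u,u)=0$. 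Polarizing as in the proof of Theorem~\ref{th:main} gives $\indo_f(u,v)=0$ for every $v\in C^\infty(\Sigma)$; combined with $\nabla_\Sigma u=0$ and $|\sigma|=0$ this yields $\mathcal{L}_f(u)=0$, hence $\ric_f(N,N)=0$ on $\Sigma$, and $\ptl u/\ptl\nu+\text{II}(N,N)\,u=0$, hence $\text{II}(N,N)=0$ along $\ptl\Sigma$. Finally~\eqref{eq:yo1} gives $K=(S_0f)/2$ (a constant, since $f$ is), and~\eqref{eq:yo2} together with $(H_f)_{\ptl M}=0$ and $\nabla_\Sigma\psi=0$ gives $H_{\ptl M}=0$ and $h=0$, so $\ptl\Sigma$ is a geodesic of $\Sigma$, hence of $M$ because $\Sigma$ is totally geodesic.

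Next I would produce the local product structure as in Theorem~\ref{th:main2}. Since $\Sigma$ is $f$-stationary, totally geodesic and embedded with $\ric_f(N,N)=0$ on $\Sigma$, $\text{II}(N,N)=0$ along $\ptl\Sigma$, and $\ptl\Sigma\neq\emptyset$, Proposition~\ref{prop:deformation} supplies a variation $\varphi:(-s_0,s_0)\times\Sigma\to M$ with $X=N$ on $\Sigma$, all $\Sigma_s:=\varphi_s(\Sigma)$ $f$-stationary, and $\Om:=\varphi\big((-s_0,s_0)\times\Sigma\big)$ an open neighborhood diffeomorphic to $(-s_0,s_0)\times\Sigma$; in particular each $\Sigma_s$ is a disk, so $\chi(\Sigma_s)=1$. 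Running the computation~\eqref{eq:desi0}--\eqref{eq:desi2} with $u_s:=\escpr{X_s,N_s}>0$ gives
\[
H_f'(s)\int_{\Sigma_s}\frac{da_s}{u_s}=\int_{\Sigma_s}\frac{(\mathcal{L}_f)_s(u_s)}{u_s}\,da_s\geq\frac{S_0}{2}\,A_f(\Sigma_s)-2\pi\chi(\Sigma_s)=\frac{S_0}{2}\,A_f(\Sigma_s)-2\pi,
\]
and since $\Sigma$ is locally weighted area-minimizing we have $A_f(\Sigma_s)\geq A_f(\Sigma)=4\pi/S_0$ for $s$ near $0$, hence $H_f'(s)\geq 0$ there. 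As in Theorem~\ref{th:ricci} this forces $A_f(\Sigma_s)\equiv A_f(\Sigma)$, so $H_f(s)\equiv 0$ and $H_f'(s)\equiv 0$; tracing the equalities back shows each $\Sigma_s$ is totally geodesic with $u_s$ constant and $f$ constant on $\Sigma_s$. By~\eqref{eq:normal}, $N_s$ is then parallel on $\Om$, its integral curves are geodesics, and the normal exponential map $F(s,p)=\exp_p(sN_p)$ is an isometry from some $(-\eps_0,\eps_0)\times\Sigma$ onto a neighborhood of $\Sigma$; moreover $H_f(s)=-\escpr{\nabla\psi,N_s}=0$ together with $\nabla_{\Sigma_s}\psi=0$ gives $\nabla\psi=0$ on that neighborhood, so $f$ is constant there, and $\Sigma$ is a totally geodesic disk of constant Gauss curvature $(S_0f)/2$ bounded by a geodesic. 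The global statement then follows by the continuation argument of Theorems~\ref{th:ricci} and~\ref{th:main2}: completeness makes $F$ defined on $\rr\times\Sigma$, and $s_\infty:=\sup\{s>0:F|_{[-s,s]\times\Sigma}\text{ is an isometry onto its image}\}=+\infty$, since otherwise $\Sigma_{\pm s_\infty}$ would again be locally weighted area-minimizing with $A_f=4\pi/S_0$ (using minimization in the isotopy class) and the local result would extend $F$ past $s_\infty$; hence $F:\rr\times\Sigma\to M$ is a local isometry, thus a covering map, and $f$ is constant on $M$.

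The step I expect to be the main obstacle is the local rigidity above: unlike in Theorem~\ref{th:ricci}, the sign of $H_f'(s)$ along the foliation $\{\Sigma_s\}$ is not automatic, and pinning it down needs both the topological constraint $\chi(\Sigma_s)=1$ --- which is exactly why the first part, ruling out cylinders, must precede it --- and the a priori bound $A_f(\Sigma_s)\geq 4\pi/S_0$ coming from the minimization hypothesis; only with both of these in hand does $H_f'(s)\geq 0$ hold, and then the equality analysis delivers the product structure and the constancy of $f$.
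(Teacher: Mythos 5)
Your proposal is correct and follows essentially the same route as the paper: the area bound comes from inserting $u=1/\sqrt{f}$ into the index form and keeping the term $\tfrac12\int_\Sigma S_f\,da\geq\tfrac{S_0}{2}A_f(\Sigma)$ in the chain \eqref{eq:yo3}, and the rigidity part runs Proposition~\ref{prop:deformation} together with \eqref{eq:desi0}--\eqref{eq:desi2} and the a priori bound $A_f(\Sigma_s)\geq 4\pi/S_0$ from local minimization to force $H_f'(s)\geq 0$, exactly as in the paper's combined proof of Theorems~\ref{th:main3} and \ref{th:main4} (you only need the $S_0>0$ branch, so you rightly avoid the more delicate argument the paper uses for $S_0<0$).
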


\begin{theorem}
\label{th:main4}
Let $M$ be a smooth oriented Riemannian $3$-manifold endowed with a density $f=e^\psi$ such that $\ptl M$ is $f$-mean convex and $S_f\geq S_0f$ on $M$ for some $S_0<0$. Consider a smooth, compact, connected, oriented, $f$-stationary surface $\Sg$ with $\emph{int}(\Sg)\sub\emph{int}(M)$ and non-empty boundary $\ptl\Sg\sub\ptl M$. If $\Sg$ is strongly $f$-stable and has negative Euler characteristic $\chi$, then $A_f(\Sg)\geq 4\pi\chi/S_0$. Moreover, if $\Sg$ is embedded, locally weighted area-minimizing, and $A_f(\Sg)=4\pi\chi/S_0$, then $\Sg$ is a totally geodesic surface of constant Gauss curvature $(S_0f)/2$ bounded by geodesics, and there is an open neighborhood of $\Sg$ in $M$ which is isometric to a Riemannian product $(-\eps_0,\eps_0)\times\Sg$ with constant density. Finally, if $M$ is complete and $\Sg$ minimizes the weighted area in its isotopy class, then the density $f$ is constant in $M$ and the Riemannian product $\rr\times\Sg$ is an isometric covering of $M$.
\end{theorem}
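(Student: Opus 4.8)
The plan is to obtain the area estimate by the argument of the proof of Theorem~\ref{th:main}, and then the rigidity statement by the argument of the proof of Theorem~\ref{th:main2}, carrying the constant $S_0$ through every inequality. For the estimate, insert the test function $u:=1/\sqrt f$ into the stability inequality $\indo_f(u,u)\geq 0$ of Corollary~\ref{cor:stable}~(i); using $u^2\,da_f=da$, $u^2\,dl_f=dl$, $|\nabla_\Sg u|^2\,da_f=\tfrac14|\nabla_\Sg\psi|^2\,da$, substituting \eqref{eq:yo1} and \eqref{eq:yo2}, integrating $\Delta_\Sg\psi$ by parts, and applying the Gauss--Bonnet theorem and the $f$-mean convexity of $\ptl M$, one reproduces the chain \eqref{eq:yo3} with the single change that the term $-\tfrac12\int_\Sg(S_f+H^2_f)\,da$ is bounded using $S_f+H^2_f\geq S_f\geq S_0 f$ and $\int_\Sg f\,da=A_f(\Sg)$ by $-\tfrac12 S_0\,A_f(\Sg)$ instead of by $0$. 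This yields $0\leq\indo_f(u,u)\leq-\tfrac12 S_0\,A_f(\Sg)+2\pi\chi$, so $S_0\,A_f(\Sg)\leq 4\pi\chi$, and dividing by $S_0<0$ reverses the inequality and gives $A_f(\Sg)\geq 4\pi\chi/S_0$, a positive bound since $\chi<0$.

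Now assume $\Sg$ is embedded, locally weighted area-minimizing, and $A_f(\Sg)=4\pi\chi/S_0$, so all of the above are equalities. As in the proofs of Theorems~\ref{th:main} and \ref{th:ricci} this forces $\Sg$ totally geodesic; $\nabla_\Sg\psi=0$, hence $f$ constant on $\Sg$; $H_f=0$ and $S_f=S_0 f$ on $\Sg$; $(H_f)_{\ptl M}=0$ along $\ptl\Sg$; and $\indo_f(u,u)=0$, so by polarization (using strong $f$-stability) $\indo_f(u,v)=0$ for all $v\in C^\infty(\Sg)$, whence \eqref{eq:ibp} together with $u\equiv\text{const}>0$ gives $\ric_f(N,N)=0$ on $\Sg$ and $\text{II}(N,N)=0$ along $\ptl\Sg$; finally \eqref{eq:yo1} and \eqref{eq:yo2} give $K=(S_0 f)/2$ (constant) and $h=0$, so $\ptl\Sg$ is geodesic in $M$. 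Apply Proposition~\ref{prop:deformation} to get a variation $\varphi\colon(-s_0,s_0)\times\Sg\to M$ with $X=N$ on $\Sg$, all leaves $\Sg_s$ $f$-stationary, and $\Om=\varphi\big((-s_0,s_0)\times\Sg\big)$ diffeomorphic to the product. Repeating the computation leading to \eqref{eq:desi2} but using $S_f\geq S_0 f$ and $\chi(\Sg_s)=\chi$, one obtains for $u_s:=\escpr{X_s,N_s}>0$ and $c(s):=\int_{\Sg_s}u_s^{-1}\,da_s>0$
\[
H_f'(s)\,c(s)=\int_{\Sg_s}\frac{(\mathcal L_f)_s(u_s)}{u_s}\,da_s
\ \geq\ \frac{S_0}{2}\,\big(A_f(\Sg_s)-A_f(\Sg)\big)+p(s),
\]
where $p(s):=\tfrac12\int_{\Sg_s}\big(u_s^{-2}|\nabla_{\Sg_s}u_s|^2+(S_f-S_0 f)+H_f(s)^2+|\sg_s|^2\big)\,da_s+\int_{\ptl\Sg_s}(H_f)_{\ptl M}\,dl_s\geq 0$, and $p(s)=0$ forces $\Sg_s$ totally geodesic, $u_s$ and $f$ constant on $\Sg_s$, $S_f=S_0 f$ on $\Sg_s$ and $(H_f)_{\ptl M}=0$ along $\ptl\Sg_s$.

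To finish, put $G(s):=A_f(\Sg_s)-A_f(\Sg)$; shrinking $s_0$, local area-minimality gives $G\geq 0$, while $G(0)=0$ and, by Lemma~\ref{lem:1st} together with $\escpr{X_s,\nu_s}=0$ and $H_f(0)=0$, $G'(s)=-H_f(s)\int_{\Sg_s}u_s\,(da_f)_s$ and $G'(0)=0$. Differentiating and inserting the displayed inequality (dropping $p\geq0$) yields $G''\leq\gamma\,G+\beta\,G'$ with $\gamma=\tfrac12|S_0|\,b/c\geq 0$ bounded and $\beta=b'/b$ bounded, where $b(s)=\int_{\Sg_s}u_s\,(da_f)_s>0$. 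Examining $e^{-\lambda s}G$ for $\lambda$ large and using $G\geq 0$, $G(0)=G'(0)=0$ forces $G\equiv 0$ near $0$, so $A_f(\Sg_s)\equiv A_f(\Sg)$, hence $H_f\equiv 0$, hence $H_f'\equiv 0$, hence $p\equiv 0$; therefore each $\Sg_s$ is totally geodesic with $u_s$ and $f$ constant on it, $H_f=-\escpr{\nabla\psi,N_s}=0$ together with $\nabla_{\Sg_s}\psi=0$ gives $\nabla\psi\equiv 0$ on $\Om$, and by \eqref{eq:normal} the field $N_s$ is parallel on $\Om$. Thus $\Om$ is isometric to $(-\eps_0,\eps_0)\times\Sg$ with constant density, $\Sg$ has constant Gauss curvature $(S_0 f)/2$ and geodesic boundary by \eqref{eq:yo1}--\eqref{eq:yo2}, and the global statement follows, exactly as in Theorems~\ref{th:ricci} and \ref{th:main2}, from the continuation argument along the normal geodesic flow using completeness and the isotopy-class hypothesis.

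The main obstacle is this last step. In Theorem~\ref{th:main2} the hypotheses $S_f\geq 0$ and $\chi(\Sg_s)=0$ force $H_f'(s)\geq 0$ for every $s$, so $\varphi$ does not increase the area and one concludes directly from local minimality; here the curvature bound only gives $H_f'(s)\geq 0$ at the parameters where $A_f(\Sg_s)=A_f(\Sg)$, so monotonicity of the area fails. One must instead convert the degenerate first‑order inequality for $H_f'$ into the second‑order inequality $G''\leq\gamma G+\beta G'$ for the area deficit and exploit $G\geq 0$, $G(0)=G'(0)=0$ to force $G\equiv 0$; the other delicate point is to check that the remainder $p(s)$ really controls all the geometric quantities needed to split the metric.
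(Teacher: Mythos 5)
Your proposal is correct, and for the first part (the area estimate, the analysis of the equality case, and the construction of the foliation via Proposition~\ref{prop:deformation} together with the inequality $H_f'(s)\,c(s)\geq\tfrac{S_0}{2}\big(A_f(\Sg_s)-A_f(\Sg)\big)+p(s)$) it coincides with the paper's proof, which treats Theorems~\ref{th:main3} and \ref{th:main4} simultaneously. Where you genuinely diverge is in the step you correctly identify as the main obstacle. The paper follows Micallef--Moraru: writing $\tfrac{S_0}{2}\big(A_f(s)-A_f(0)\big)=-\tfrac{S_0}{2}\int_0^sH_f(t)\,\eta(t)\,dt$, it proves directly that $H_f(t)\geq 0$ on a small interval $[0,\eps)$ by a contradiction argument at $t_*=\inf\{t\,;\,H_f(t)\leq H_f(t_0)\}$ using the mean value theorem and a uniform bound $-CS_0\eps<2$; this yields $H_f'\geq 0$, hence that the deformation never increases the weighted area, \emph{without} invoking the minimality of $\Sg$, and only then is local minimality used to force $A_f(s)\equiv A_f(0)$. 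You instead feed local minimality in from the start ($G\geq 0$) and convert the degenerate first-order inequality into the second-order differential inequality $G''\leq\gamma G+\beta G'$ for the area deficit, concluding $G\equiv 0$ from $G\geq 0$, $G(0)=G'(0)=0$. Your sketched lemma is true and your indicated method works: setting $h=e^{-\lambda s}G$ with $\lambda$ large one gets $h''<0$ wherever $h'>0$ (and $h\geq 0$), so if $h$ were ever positive one could pick $t^*$ with $h'(t^*)>0$ and trace back along the maximal interval where $h'>0$ to contradict $h'=0$ at its left endpoint; you should spell this out, since it is the crux of the theorem. Each route buys something: yours avoids the somewhat ad hoc infimum argument and reduces the issue to a clean comparison statement, while the paper's establishes the stronger fact that the foliation is area-nonincreasing independently of any minimizing hypothesis, which is then reused verbatim in the global continuation argument. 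Your handling of the remainder $p(s)$ (including that $p(s)=0$ together with equality in \eqref{eq:ausi} forces $\nabla_{\Sg_s}\psi=0$, hence with $H_f(s)=0$ that $\nabla\psi\equiv 0$ on $\Om$) matches the paper and does control all the quantities needed to split the metric.
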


\begin{remark}
Let $g$ be the genus of $\Sg$ and $m$ the number of boundary components. Then, the hypothesis $\chi<0$ is equivalent to that $g\geq 1$, or $g=0$ and $m\geq 3$. 
\end{remark}

\begin{proof}[Proof of Theorems~\ref{th:main3} and \ref{th:main4}]
The first part of the statements comes from the proof of Theorem~\ref{th:main}. In fact, equation \eqref{eq:yo3} yields
\[
0\leq\indo_f(u,u)\leq 2\pi\chi-\frac{1}{2}\int_\Sg S_f\,da\leq 2\pi\chi-\frac{S_0}{2}\,A_f(\Sg),
\]
from which we deduce the area estimates. Moreover, in the equality cases, we get that $f$ is constant on $\Sg$, the surface $\Sg$ is totally geodesic with constant Gauss curvature $(S_0f)/2$, $H_f=\ric_f(N,N)=0$ on $\Sg$, $\ptl\Sg$ consists of geodesics, and $\text{II}(N,N)=0$ along $\ptl\Sg$. In particular, we can use Proposition~\ref{prop:deformation} to construct a variation $\varphi:(-s_0,s_0)\times\Sg\to M$ of $\Sg$ such that $X=N$ on $\Sg$, all the hypersurfaces $\Sg_s$ are $f$-stationary, and $\Om:=\varphi\big((-s_0,s_0)\times\Sg\big)$ is an open neighborhood of $\Sg$ in $M$ diffeomorphic to $(-s_0,s_0)\times\Sg$. 

Now we show that $\varphi$ does not increase the weighted area. We use the same notation as in the proof of Theorem~\ref{th:main2}.  Let us see that $H'_f(s)\geq 0$ for any $s$ small enough, and that equality for some $s$ implies that $\Sg_s$ is totally geodesic and $u_s$ is constant on $\Sg_s$. By taking into account \eqref{eq:desi0} and \eqref{eq:desi2}, we obtain
\begin{align}
\label{eq:desi3}
H_f'(s)\,\int_{\Sg_s}\frac{1}{u_s}\,da_s&\geq\frac{1}{2}\int_{\Sg_s}\left(\frac{|\nabla_{\Sg_s}u_s|^2}{u_s^2}+S_f+H_f(s)^2+|\sg_s|^2\right)da_s
\\
\nonumber
&+\int_{\ptl\Sg_s}(H_f)_{\ptl M}\,dl_s-2\pi\chi(\Sg_s)
\\
\nonumber
&\geq\frac{1}{2}\int_{\Sg_s}S_f\,da_s+\int_{\ptl\Sg_s}(H_f)_{\ptl M}\,dl_s-2\pi\chi(\Sg_s)
\\
\nonumber
&\geq \frac{S_0}{2}\,A_f(s)-2\pi\chi,
\end{align}
where we have applied the hypotheses $S_f\geq S_0f$ and $(H_f)_{\ptl M}\geq 0$. If equality holds in \eqref{eq:desi3} for some $s\in (-s_0,s_0)$, then $\Sg_s$ is totally geodesic and $u_s$ is constant on $\Sg_s$. Moreover, by \eqref{eq:ausi} we also have that the density $f$ is constant on $\Sg_s$.

Suppose first that $S_0>0$ and $A_f(\Sg)=4\pi/S_0$. Since $\Sg$ is locally weighted area-minimizing, then $A_f(s)\geq A_f(0)$ in a small open interval $J$ containing $0$. Thus, equation \eqref{eq:desi3} gives us
\[
H_f'(s)\,\int_{\Sg_s}\frac{1}{u_s}\,da_s\geq\frac{S_0}{2}\,A_f(\Sg)-2\pi=0,
\]
for any $s\in J$, and the claim is proved.

Suppose now that $S_0<0$ and $A_f(\Sg)=4\pi\chi/S_0$. We reason as in the proof of \cite[Thm.~2]{micallef}. For any $s\in (0,s_0)$, we denote $\phi(s):=\int_{\Sg_s}(1/u_s)\,da_s$ and $\eta(s):=\int_{\Sg_s}u_s\,(da_f)_s$. From the first variation formula in Lemma~\ref{lem:1st}, equation \eqref{eq:desi3} reads
\begin{equation}
\label{eq:desi4}
H_f'(s)\geq\frac{S_0}{2\phi(s)}\,\int_0^sA_f'(t)\,dt=-\frac{S_0}{2\phi(s)}\int_0^sH_f(t)\,\eta(t)\,dt.
\end{equation}
Let $C>0$ be the maximum value of the function $\phi(s)^{-1}\int_0^s\eta(t)\,dt$ on $[0,s_0]$. Fix a number $\eps>0$ such that $-CS_0\,\eps<2$. If we show that $H_f(t)\geq 0$ for any $t\in [0,\eps)$, then we deduce from \eqref{eq:desi4} that $H_f'(s)\geq 0$ for any $s\in [0,\eps)$. This proves the claim by arguing similarly on some $(\eps',0]$. Suppose that there is $t_0\in (0,\eps)$ with $H_f(t_0)<0$. Let $t_*:=\inf B$, where $B:=\{t\in [0,t_0]\,;\,H_f(t)\leq H_f(t_0)\}$. Note that $t_*\leq t_0<\eps$. Let us assume $t_*>0$. By the definition of $t_*$ we have $H_f(t)\geq H_f(t_0)$ for any $t\in [0,t_*]$ and $H_f(t_0)=H_f(t_*)$. On the other hand, we apply the mean value theorem to find $t_1\in (0,t_*)$ such that $H_f'(t_1)=H_f(t_*)/t_*$. From \eqref{eq:desi4} we would obtain
\begin{align*}
\frac{H_f(t_*)}{t_*}=H_f'(t_1)&\geq-\frac{S_0}{2\phi(t_1)}\int_0^{t_1}H_f(t)\,\eta(t)\,dt\geq-\frac{S_0}{2}\,H_f(t_*)\,\frac{1}{\phi(t_1)}\int_0^{t_1}\eta(t)\,dt
\\
&\geq-\frac{CS_0}{2}\,H_f(t_*)>\frac{H_f(t_*)}{\eps},
\end{align*}
which is a contradiction since $t_*<\eps$. As a consequence, we would get $t_*=0$, and so $0=H_f(0)\leq H_f(t_0)<0$, which is another contradiction.

Finally, the conclusions of both theorems follow as in the proof of Theorem~\ref{th:main2}.
\end{proof}

\begin{remark}
Following classical terminology, we say that a strongly $f$-stationary hypersurface $\Sg$ is \emph{strictly $f$-stable} if the second derivative of the weighted area functional satisfies $A_f''(0)>0$ for any variation of $\Sg$. Clearly, a strictly $f$-stable hypersurface is locally weighted area-minimizing. In particular, all the results in this section hold for strictly $f$-stable hypersurfaces.
\end{remark}

\providecommand{\bysame}{\leavevmode\hbox to3em{\hrulefill}\thinspace}
\providecommand{\MR}{\relax\ifhmode\unskip\space\fi MR }
\providecommand{\MRhref}[2]{%
  \href{http://www.ams.org/mathscinet-getitem?mr=#1}{#2}
}
\providecommand{\href}[2]{#2}

\end{document}